\DeclareFontFamily{OT1}{pzc}{}
\DeclareFontShape{OT1}{pzc}{m}{it}%
             {<-> s * [1.100] pzcmi8t}{}
\DeclareMathAlphabet{\mathscr}{OT1}{pzc}%
                                 {m}{it}
\numberwithin{equation}{section}
\def\Cl#1{\ensuremath{{\mathcal {#1}}}}
\def\pv#1{\ensuremath{{\mathsf{#1}}}}
\def\Om#1#2{\ensuremath{\overline\Omega_{#1}{\pv{#2}}}}
\def\malcev{\mathop{\raise1pt\hbox{\footnotesize$\bigcirc$\kern-8pt\raise1pt
      \hbox{\tiny$m$}\kern1pt}}}
\def\op{\lbrack\hskip-1.6pt\lbrack\relax}
\def\cl{\rbrack\hskip-1.6pt\rbrack\relax}
\def\loc1{\ell \pv{I}}
\def\krho{{K(\loc1)}}
\def\expsl{{K(\ell \pv{Sl})}}
\def\dast{\mathbin{**}}
\newtheorem{Thm}{Theorem}[section]
\newtheorem{Prop}[Thm]{Proposition}
\newtheorem{Lemma}[Thm]{Lemma}
\newtheorem{Cor}[Thm]{Corollary}
   \theoremstyle{definition}
\theoremstyle{definition}
\newtheorem{Example}[Thm]{Example}
\begin{document}

\title
{
  Equidivisible pseudovarieties of semigroups
}
\thanks{Work partially supported respectively by CMUP (UID/MAT/00144/2013)
  and CMUC (UID/MAT/00324/2013), which are funded by FCT (Portugal)
  with national (MEC) and European structural funds (FEDER) under the
  partnership agreement PT2020.
}

\author{Jorge Almeida}
\address{CMUP, Departamento de Matem\'atica,
  Faculdade de Ci\^encias, Universidade do Porto, 
  Rua do Campo Alegre 687, 4169-007 Porto, Portugal.}
\email{jalmeida@fc.up.pt}

\author{Alfredo Costa}
\address{CMUC, Department of Mathematics, University of Coimbra,
  Apartado 3008, EC Santa Cruz, 3001-501 Coimbra, Portugal.}
\email{amgc@mat.uc.pt}

\begin{abstract}
  We give a complete characterization of pseudovarieties of semigroups
  whose finitely generated relatively free profinite semigroups are
  equidivisible. Besides the pseudovarieties of completely simple
  semigroups, they are precisely the pseudovarieties that are closed
  under Mal'cev product on the left by the pseudovariety of locally
  trivial semigroups. A further characterization which turns out to be
  instrumental is as the non-completely simple pseudovarieties that
  are closed under two-sided Karnofsky-Rhodes expansion.
\end{abstract}

\keywords{semigroup, equidivisible, pseudovariety, Karnofsky-Rhodes
  expansion, connected expansion, two-sided Cayley graph}

\makeatletter
\@namedef{subjclassname@2010}{%
  \textup{2010} Mathematics Subject Classification}
\makeatother
\subjclass[2010]{Primary 20M07, 20M05}

\maketitle

\section{Introduction}

A pseudovariety of semigroups is a class of finite semigroups closed
under taking subsemigroups, homomorphic images and finitary products.
In the past few decades, pseudovarieties provided the main framework
for the research on finite semigroups, motivated by Eilenberg's
correspondence theorem between pseudovarieties and varieties of
languages. In this context, the finitely generated relatively free
profinite semigroups associated to each pseudovariety proved to be of
fundamental importance. We assume the reader has some familiarity with
this background. The books~\cite{Rhodes&Steinberg:2009qt,Almeida:1994a}
are indicated as supporting references.
The paper~\cite{Almeida:2003cshort} might also be useful for someone
looking for a brief introduction.

In this paper we are concerned with equidivisible relatively free
profinite semigroups. A semigroup $S$ is \emph{equidivisible} if for
every $u,v,x,y\in S$, the equality $uv=xy$ implies that $u=x$ and
$v=y$, or that there is $t\in S$ such that $ut=x$ and $v=ty$, or such
that $xt=u$ and $y=tv$. Equidivisible semigroups were introduced
in~\cite{Levi:1944} as a generalization of free semigroups. They were
further investigated in~\cite{McKnight&Storey:1969} where a
characterization of the completely simple semigroups as being a
special class of equidivisible semigroups was given
(cf.~Theorem~\ref{t:McKnight-Storey}).

A pseudovariety of semigroups~$\pv V$
is said to be \emph{equidivisible} if every finitely generated free pro-$\pv V$ semigroup is equidivisible.
The paper~\cite{Almeida&Klima:2015a}
includes results about
a special class of semigroup pseudovarieties, the WGGM pseudovarieties,
the ones whose corresponding relatively free profinite semigroups
are ``weakly generalized group mapping''.
In the same paper, the WGGM condition is applied
to obtain sufficient conditions for a pseudovariety to be finitely join irreducible in the lattice of ordered pseudovarieties.
It is also shown in~\cite{Almeida&Klima:2015a} 
that a sufficient condition for a semigroup pseudovariety $\pv V$
to be WGGM is to be equidivisible and to contain $\pv {LSl}$ (the pseudovariety of finite semigroups whose local monoids are semillatices). 
This adds motivation to investigate the natural
question: which pseudovarieties are equidivisible, other than those of completely simple semigroups?

In this paper we give a complete characterization of the equidivisible pseudovarieties of semigroups (Sec.~\ref{sec:char-equid-pseud}), showing that those which are not contained in
the pseudovariety of completely simple semigroups are precisely the ones of the form $\pv V=\pv {LI}\malcev \pv V$, where $\pv {LI}$ denotes the pseudovariety
of locally trivial semigroups and $\malcev$
denotes the Mal'cev product of pseudovarieties.

The semigroup pseudovarieties of the form $\pv V=\pv A\malcev \pv V$,
where $\pv A$ denotes the pseudovariety of aperiodic semigroups,
are those whose
corresponding variety of languages is closed under language
concatenation~\cite{Straubing:1979a,Chaubard&Pin&Straubing:2006}.
In~\cite[Lemma 4.8]{Almeida&ACosta:2007a} it is shown that every
such pseudovariety is equidivisible, with a proof
that uses the closure under concatenation.
In contrast to the approach made in~\cite{Almeida&ACosta:2007a} for this
class of pseudovarieties,
our proof of the equidivisibility
of the pseudovarieties of the
form $\pv V=\pv {LI}\malcev \pv V$
does not use a characterization in terms of the corresponding
varieties of languages, which are those that are closed under
unambiguous product of languages~\cite{Pin:1980b,Pin:1997}. For our
complete characterization of the equidivisible pseudovarieties, and in
both directions of the proof, we had to use
a distinct approach, based on the two-sided Karnofsky-Rhodes expansion
of semigroups. This approach was inspired by the proof given in~\cite{Rhodes&Steinberg:2002} that if a pseudovariety of semigroups $\pv V$
is stable under (one-sided) right Karnofsky-Rhodes expansion, then
the finitely generated free pro-$\pv V$ semigroups
have unambiguous $\leq_{\mathcal R}$-order.
 It turns out that, by a deep result  of Rhodes \emph{et al.}~\cite{Rhodes&Tilson:1989,Rhodes&Weil:1989b}, the
 pseudovarieties of the form \mbox{$\pv V=\pv {LI}\malcev \pv V$} are
 precisely those that are stable under two-sided Karnofsky-Rhodes
 expansion
 (Corollary~\ref{c:closure-under-Karnofsky-Rhodes-expansion}).
 
 Roughly speaking, the two-sided Karnofsky-Rhodes expansion keeps
 track of the transition edges used to read a word in the two-sided
 Cayley graph of the semigroup expanded. If we only keep track
 of the strongly connected components, we obtain another expansion,
 which we call the \emph{two-sided connected expansion}. We deduce
 from our main result that a pseudovariety is closed under 
 two-sided Karnofsky-Rhodes expansion if and only if it is closed
 under two-sided connected
 expansion~(cf.~Corollary~\ref{c:characterization-of-LImV}).

 Another by-product of our results concerns
 the pseudovariety $\pv {LG}$ of finite semigroups whose local monoids are groups. After showing directly that the equidivisible subpseudovarieties
 of $\pv {LG}$ are precisely those containing $\pv {LI}$, we apply our main result to deduce that,
  whenever $\pv V$ is a subpseudovariety of $\pv {LG}$,
 the join $\pv {LI}\vee \pv V$ is equal to the Mal'cev product
 $\pv {LI}\malcev \pv V$ (Corollary~\ref{c:CSveeLI}).

\section{Preliminaries}
\label{sec:prelims}

Recall that
\cite{Rhodes&Steinberg:2009qt,Almeida:1994a,Almeida:2003cshort} are
our supporting references. We use the standard notation $\pv V\vee\pv
W$, $\pv V*\pv W$, $\pv V\dast \pv W$ and $\pv V\malcev \pv W$
respectively for the join, the semidirect product, the two-sided
semidirect product, and the Mal'cev product of pseudovarieties of
semigroups. Occasionally (only in the preparatory
Section~\ref{sec:two-sided-karnofsky}) we refer to pseudovarieties of
semigroupoids (namely the pseudovariety $\ell {\pv I}$ of trivial
semigroupoids) and to varietal operations concerning them. We refer to
\cite{Rhodes&Steinberg:2009qt,Almeida:2003cshort} for details.

The following
pseudovarieties of semigroups appear in this paper:
\begin{itemize}
\item $\pv S$: finite semigroups;
\item $\pv A$: finite aperiodic semigroups;
\item $\pv G$: finite groups;
\item $\pv I$: trivial semigroups;
\item $\pv D$: finite semigroups all of whose idempotents are right zeros;
\item $\pv K$: finite semigroups all of whose idempotents are left zeros;
\item $\pv {LI}=\pv K\vee\pv D$: finite semigroups whose local monoids are trivial;
\item $\pv {LG}$: finite semigroups whose local monoids are groups;
\item $\pv {LSl}$: finite semigroups whose local monoids are
  semilattices; 
\item $\pv {CS}$: finite completely simple semigroups;
\item $\pv {CR}$: finite completely regular semigroups.
\end{itemize}

For a semigroup $S$, let $S^I$ be the monoid obtained
from $S$ by adding an extra element $I$ which is the
identity of~$S^I$.
This allows
a convenient way of writing the definition of equidivisibility:
the semigroup
$S$ is equidivisible when, for every $u,v,x,y\in S$,
the equality $uv=xy$ implies that there is  $t\in S^I$ such that
$ut=x$ and $v=ty$, or such that $xt=u$ and $y=tv$.

If $f\colon S\to T$ is a semigroup homomorphism,
then we extend $f$ to a homomorphism from $S^I$ to $T^I$, also denoted
$f$, by letting $f(I)=I$.
Note that, for every alphabet~$A$, the monoid $(A^+)^I$
can be identified with the free monoid~$A^\ast$ in a natural manner. In particular,
if $\varphi$ is a homomorphism
from $A^+$ to a semigroup $S$, then we
have a unique extension of $\varphi$ to a homomorphism
from $A^\ast$ to $S^I$, with $\varphi(1)=I$.

We were inspired by~\cite{Rhodes&Steinberg:2002} in the use of
semigroup expansions to obtain our main result. With this reference in
mind (see also \cite{Elston:1999}), we quickly recall that, for a
fixed alphabet~$A$, the category of $A$-generated semigroups is the
category $\Cl S_A$ whose objects are the pairs of the form
$(S,\varphi)$ in which $\varphi$ is an onto homomorphism $A^+\to S$,
and where morphisms $(S,\varphi)\to (T,\psi)$ are the homomorphisms
$\theta\colon S\to T$ such that $\theta\circ\varphi=\psi$. An
\emph{expansion cut to generators} defined in $\mathcal S_A$ is an
endofunctor $F\colon \Cl S_A\to \Cl S_A$ equipped with a natural
transformation from $F$ to the identity functor of~$\Cl S_A$. A
convenient way to refer to $F$ is the notation correspondence
$(S,\varphi)\mapsto (S^F,\varphi^F)$, where the pair $(S^F,\varphi^F)$
is the object $F(S,\varphi)$.

\section{The two-sided Karnofsky-Rhodes expansion}
\label{sec:two-sided-karnofsky}

Let $\varphi$ be a homomorphism from $A^+$ onto a semigroup $S$. The
\emph{two-sided Cayley graph} defined by $\varphi$ is the directed
graph $\Gamma_\varphi$ whose set of vertices is $S^I\times S^I$, and
where an edge from $(s_1,t_1)$ to $(s_2,t_2)$ is a triple
$((s_1,t_1),a,(s_2,t_2))$, with $a\in A$, such that
$s_1\varphi(a)=s_2$ and $t_1=\varphi(a)t_2$. Giving to each edge
$((s_1,t_1),a,(s_2,t_2))$ the label $a$, the graph $\Gamma_\varphi$
becomes a semi-automaton over the alphabet $A$. A labeling of paths is
inherited from the labeling of edges in an obvious way. If $u\in A^+$,
then we denote by $p_{\varphi,u}$, or simply $p_{u}$ if $\varphi$ is
understood, the unique path from $(I,\varphi(u))$ to $(\varphi(u),I)$
labeled by $u$.

For an edge $t$ in a directed graph $H$, we denote by $\alpha(t)$ its source and by
$\omega(t)$ is target. The edge $t$ is a \emph{transition edge} of $H$ if
$\alpha(t)$ and $\omega(t)$ are not in the same strongly connected component
of $H$. Returning our attention to the two-sided Cayley graph $\Gamma_\varphi$,
for a path $p$ in $\Gamma_\varphi$, denote by $T(p)$ the set of
transition edges in $p$.
Let $\equiv_\varphi$ be the binary relation on $A^+$ defined by
$u\equiv_\varphi v$ if and only if
$\varphi(u)=\varphi(v)$ and $T(p_u)=T(p_v)$.
The relation $u\equiv_\varphi v$ is a congruence, a well-known fact
whose routine proof is similar to the explicit proof we give later of an analogous result, Lemma~\ref{l:equidivisible-congruence}.
Denote by $S_\varphi^\krho$ the quotient $A^+/{\equiv_\varphi}$ and by
$\varphi^\krho$ the corresponding quotient homomorphism $A^+\to S_\varphi^\krho$.
For the sake of simplicity, we will write $S^\krho$
instead of $S_\varphi^\krho$, if the dependency on $\varphi$ is implicitly understood.

It is well known that the
correspondence $(S,\varphi)\mapsto (S^\krho,\varphi^\krho)$ is an expansion cut to generators, which is called
the \emph{two-sided Karnofsky-Rhodes expansion}.
There is an alternative way of defining this expansion, which puts it as a special case within a more general framework, and which we refer
briefly, leaving the details for
the bibliographic references supporting our discussion.
The two-sided Karnofsky-Rhodes expansion
is an example of a two-sided semidirect product expansion defined by a variety of
semigroups (in this case, the variety of trivial semigroups), as introduced by Elston in~\cite{Elston:1999}.
In~\cite{Rhodes&Steinberg:2002},
a variation of this approach is followed, one where
pseudovarieties of semigroupoids are used instead of varieties of semigroups.
The notation $S^\krho$ is consistent with the notation used
 in~\cite{Rhodes&Steinberg:2002} for the two-sided semidirect product
 expansion $S^{K(\pv V)}$ of a profinite semigroup $S$ defined by
 a pseudovariety~$\pv V$ of semigroupoids.
 As observed in~\cite[Sec.~10]{Rhodes&Steinberg:2002},
 if $\pv V$ is a pseudovariety of semigroups, then
 $S^{K(\ell\pv V)}$ is the corresponding expansion $S^{K(\pv V)}$ introduced by Elston. 

 Suppose that the alphabet $A$ is finite. If $S$ is finite then $S^\krho$ is finite,
because a kernel class of $\varphi^\krho$
is determined by a kernel class
 of $\varphi$ together with a set of transition edges of
 $\Gamma_\varphi$, and there is only a finite number of such classes
 and edges.

 More generally, as explained in~\cite[Sec.~10]{Rhodes&Steinberg:2002},
 if $S$ is finite and $\pv V$ is a locally finite pseudovariety
 of semigroupoids (which is the case of $\ell\pv {I}$), then
 $S^{K(\pv V)}$ is a finite semigroup which is in a natural way
 a two-sided semidirect product $(\Om {\Gamma_\varphi}V)\dast S$,
 and so if $S$ belongs to a pseudovariety $\pv W$ of semigroups, then
 $S^{K(\pv V)}$ belongs to $\pv V\dast\pv W$.
 
 As remarked at the beginning
 of Section 11 of~\cite{Rhodes&Steinberg:2002},
 the isomorphism
 between the quotient $A^+/{\equiv_\varphi}$
 and the two-sided semidirect product $(\Om {\Gamma_\varphi}\ell \pv{I})\dast S$
 is justified by Tilson's result asserting
 that two paths in a graph $X$ coincide in the 
 locally trivial free category
 generated by $X$ if and only if they have the same
 transition edges.
 
 The following result is a special case
  of~\cite[Theorem 3.6.4]{Rhodes&Steinberg:2009qt}.
 
   \begin{Prop}\label{p:expansion-of-free-prov}
    Let $\pv W$ be a locally finite pseudovariety and let $A$ be a
    finite alphabet. Then $(\Om AW)^\krho$ is
    isomorphic to $\Om A{(\ell\pv {I}\dast\pv W)}$.
  \end{Prop}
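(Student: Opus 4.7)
The plan is to verify directly that the finite $A$-generated semigroup $(\Om AW)^\krho$ has the universal property defining $\Om A{(\ell\pv{I}\dast\pv W)}$.

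First, since $A$ is finite and $\pv W$ is locally finite, $S:=\Om AW$ is finite. Letting $\varphi\colon A^+\to S$ be the canonical map, the paragraph preceding the statement gives that $S^\krho\cong (\Om{\Gamma_\varphi}{\ell\pv{I}})\dast S$ and that this semigroup lies in $\ell\pv{I}\dast\pv W$. Because $\varphi^\krho\colon A^+\to S^\krho$ is onto, the universal property of $\Om A{(\ell\pv{I}\dast\pv W)}$ immediately yields a continuous surjection $\Om A{(\ell\pv{I}\dast\pv W)}\twoheadrightarrow S^\krho$, so the content is in the opposite direction.

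For that, I would show that every $A$-generated $T\in\ell\pv{I}\dast\pv W$ is a quotient of $S^\krho$: given a surjection $\psi\colon A^+\to T$, the task is to prove that $u\equiv_\varphi v$ implies $\psi(u)=\psi(v)$. By definition of the two-sided semidirect product of pseudovarieties, $T$ divides some $U\dast V$ with $U\in\ell\pv{I}$ (as a semigroupoid) and $V\in\pv W$, so by choosing an $A$-generated lift one reduces at once to the case where $\psi$ maps into $U\dast V$ itself. The projection $U\dast V\to V$ composed with $\psi$ factors, by the freeness of $S$ in $\pv W$, as $A^+\xrightarrow{\varphi}S\to V$; hence $\varphi(u)=\varphi(v)$ already forces the $V$-components of $\psi(u)$ and $\psi(v)$ to coincide. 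To equate the $U$-components, I would appeal to Tilson's theorem recalled above: because $U\in\ell\pv{I}$, the $U$-component of $\psi(u)$ depends on the corresponding path $p_{\varphi,u}$ in $\Gamma_\varphi$ only through its set of transition edges, so $T(p_{\varphi,u})=T(p_{\varphi,v})$ delivers the desired equality.

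The main obstacle is precisely this last step, namely confirming that the $U$-component of an element $\psi(u)\in U\dast V$ really is read off the path $p_{\varphi,u}$ in the two-sided Cayley graph of $\varphi\colon A^+\to\Om AW$, and not of some finer graph coming from $T$ or $\psi$ itself. This is exactly the content of the identification of $S^\krho$ with $(\Om{\Gamma_\varphi}{\ell\pv{I}})\dast S$ recalled just before the proposition; once that identification is in hand, Tilson's theorem closes the argument.
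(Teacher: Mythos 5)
The paper offers no proof of this proposition: it is quoted as a special case of \cite[Theorem 3.6.4]{Rhodes&Steinberg:2009qt}, so your attempt must be judged on its own terms. Your overall strategy is the standard one for that theorem and is sound: since $\ell\pv I\dast\pv W$ is locally finite, it suffices to exhibit $S^\krho$ (with $S=\Om AW$) as a finite $A$-generated member of $\ell\pv{I}\dast\pv W$ of which every $A$-generated member of that pseudovariety is a quotient respecting the generators. The first half and the ``easy'' surjection $\Om A{(\ell\pv {I}\dast\pv W)}\twoheadrightarrow S^\krho$ are unproblematic.

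The gap sits exactly where you flag it, and the resolution you propose does not close it. The identification $S^\krho\cong(\Om {\Gamma_\varphi}\ell \pv{I})\dast S$ describes the expansion of $S$ itself; it says nothing about an arbitrary $A$-generated $T$ dividing some $U\dast V$ with $U\in\ell\pv{I}$ and $V\in\pv W$. For such a $T$ the locally trivial component of $\psi(u)$ is accumulated along the path labelled $u$ in the two-sided Cayley graph of $V$ (relative to the induced generating map $A^+\to V$), not along $p_{\varphi,u}$ in $\Gamma_\varphi$, so your claim that this component ``is read off the path $p_{\varphi,u}$'' is not what the construction gives you. What is true, and what you still need to prove, is a transfer statement: by freeness of $S$ the map $A^+\to V$ factors as $h\circ\varphi$; the homomorphism $h\colon S\to V$ induces a morphism of labelled graphs $\Gamma_\varphi\to\Gamma_{h\circ\varphi}$ carrying $p_{\varphi,u}$ onto the path for $u$ in $\Gamma_{h\circ\varphi}$; and since a graph morphism maps each strongly connected component into a strongly connected component, an edge of $p_{\varphi,u}$ that is not a transition edge has a non-transition image. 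Hence $T(p_{\varphi,u})=T(p_{\varphi,v})$ forces the corresponding equality of transition-edge sets in $\Gamma_{h\circ\varphi}$, and only then does Tilson's theorem, applied over $\Gamma_{h\circ\varphi}$ (where one may take $U$ to be the free locally trivial semigroupoid on that graph), equate the two locally trivial components. This is the same functoriality that the paper proves explicitly for the connected expansion and declares routine for the Karnofsky--Rhodes one; with it, plus the routine handling of the division, your argument is complete, but as written the key implication $u\equiv_\varphi v\Rightarrow\psi(u)=\psi(v)$ is asserted rather than established.
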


It is an easy exercise to show, directly from the definition we gave of
the two-sided Karnofsky-Rhodes expansion,
that if $\pi$ is the canonical projection $S^\krho\to S$
then the semigroup $\pi^{-1}(e)$ satisfies the identity
$xyz=xz$, for every idempotent $e$ of $S$.
Actually, as remarked in the proof
of Lemma~3.4 in \cite{Pin&Straubing&Therien:1988},
we have $\pv {\ell I}\dast\pv V\subseteq \op xyz=xz\cl\malcev \pv V$,
and so $\pv {\ell I}\dast\pv V\subseteq \pv {LI}\malcev \pv V$. 
Denote by $\bigcup_{n\geq 1}\ell{\pv I}\dast^n\pv V$ the
sequence of semigroup pseudovarieties recursively defined by
\begin{equation*}
  \ell{\pv I}\dast^0\pv V=\pv V,\quad
  \ell{\pv I}\dast^n\pv V=\ell{\pv I}\dast(\ell{\pv I}\dast^{n-1}\pv V),\quad n\geq 1.
\end{equation*}
The following theorem is a deep result
  of Rhodes \emph{et~al.}~\cite{Rhodes&Tilson:1989,Rhodes&Weil:1989b}
  which the reader can find in \cite[Corollary 5.3.22]{Rhodes&Steinberg:2009qt}.
 
\begin{Thm}\label{t:iteration-of-semidirect-product}
  Let $\pv V$ be a pseudovariety of semigroups. Then we have
  $\pv {LI}\malcev \pv V=\bigcup_{n\geq 0}\ell{\pv I}\dast^n\pv V$.
\end{Thm}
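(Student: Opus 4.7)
The plan is to prove the two inclusions separately. For the easy direction $\bigcup_{n\geq 0}\ell\pv{I}\dast^n\pv V\subseteq \pv{LI}\malcev\pv V$, I would argue by induction on~$n$. The base case $n=0$ is the inclusion $\pv V\subseteq \pv{LI}\malcev\pv V$, witnessed by the identity relational morphism: preimages of idempotents are singletons and hence trivially lie in $\pv{LI}$. For the inductive step, the excerpt has already recorded $\ell\pv{I}\dast \pv W\subseteq \pv{LI}\malcev \pv W$ for every pseudovariety~$\pv W$; taking $\pv W=\ell\pv{I}\dast^{n-1}\pv V$ together with the inductive hypothesis yields
\[
\ell\pv{I}\dast^n\pv V \;\subseteq\; \pv{LI}\malcev(\ell\pv{I}\dast^{n-1}\pv V) \;\subseteq\; \pv{LI}\malcev(\pv{LI}\malcev \pv V).
\]
To close the loop one needs $\pv{LI}\malcev(\pv{LI}\malcev\pv V)=\pv{LI}\malcev\pv V$, which in turn reduces to $\pv{LI}\malcev\pv{LI}\subseteq \pv{LI}$: composing two witnessing relational morphisms and restricting to the preimage of each idempotent of a $\pv V$-target produces a locally trivial semigroup relationally mapped onto a locally trivial semigroup with locally trivial kernels, and a short verification on local monoids shows that the total semigroup is again locally trivial.

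For the reverse inclusion $\pv{LI}\malcev\pv V\subseteq \bigcup_{n\geq 0}\ell\pv{I}\dast^n\pv V$, fix $S\in \pv{LI}\malcev\pv V$ with witnessing relational morphism $\tau\colon S\to T$, $T\in \pv V$, and $\tau^{-1}(e)\in \pv{LI}$ for every idempotent $e\in T$. The strategy is to realise $S$ as a quotient of a semigroup built from $T$ by finitely many iterated two-sided Karnofsky-Rhodes expansions. Concretely, I would pick a finite generating set $A$ for~$S$, lift $\tau$ to a homomorphism $\psi\colon A^+\to T$, iterate the Karnofsky-Rhodes expansion $n$ times on $(T,\psi)$, and use Proposition~\ref{p:expansion-of-free-prov} iteratively to conclude that the $n$-fold expanded semigroup belongs to $\ell\pv{I}\dast^n\pv V$. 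The task then becomes showing that, for $n$ large enough, the surjection $A^+\to S$ factors through this iterated expansion, which would place $S$ inside $\ell\pv{I}\dast^n\pv V$ as required.

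The main obstacle is producing such a bound on~$n$ uniformly from the data of $\tau$. The $\pv{LI}$-condition on the kernels $\tau^{-1}(e)$ is captured by the identities $x^{\omega+1}=x^\omega$ and $x^\omega y x^\omega=x^\omega$ holding on each kernel, and the goal is to convert this local, kernel-by-kernel information into a global bound on how many layers of transition edges, across successive two-sided Cayley graphs, can be read non-trivially when evaluating a word of $A^+$ in $S$. This stabilization is precisely the combinatorial core of the theorem, and is what makes the cited results of Rhodes-Tilson~\cite{Rhodes&Tilson:1989} and Rhodes-Weil~\cite{Rhodes&Weil:1989b} deep; in the interest of a clean exposition I would rely on those results for the uniform bound and assemble the rest of the proof around them.
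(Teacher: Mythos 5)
The paper offers no proof of this theorem: it is quoted verbatim as a deep result of Rhodes \emph{et al.}~\cite{Rhodes&Tilson:1989,Rhodes&Weil:1989b}, with the reader sent to \cite[Corollary 5.3.22]{Rhodes&Steinberg:2009qt}. So there is no internal argument to compare yours against. Your induction for the inclusion $\bigcup_{n\geq 0}\ell\pv{I}\dast^n\pv V\subseteq\pv{LI}\malcev\pv V$ is sound and is essentially what the paper alludes to as ``the easy part'' in the proof of Corollary~\ref{c:closure-under-Karnofsky-Rhodes-expansion}: the inductive step does reduce to $\pv{LI}\malcev(\pv{LI}\malcev\pv V)\subseteq\pv{LI}\malcev\pv V$, which follows by composing the two witnessing relational morphisms once one verifies $\pv{LI}\malcev\pv{LI}\subseteq\pv{LI}$. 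That verification is genuinely short: if $\tau\colon S\to T$ witnesses $S\in\pv{LI}\malcev\pv{LI}$, $e=e^2\in S$, $s\in S$ and $t\in\tau(e)$, then $e$ and $ese$ both lie in $\tau^{-1}(t^\omega)$, which is in $\pv{LI}$, so $ese=e(ese)e=e$. One technical remark: to place the iterated expansion of $(T,\psi)$ in $\ell\pv{I}\dast^n\pv V$ you should not invoke Proposition~\ref{p:expansion-of-free-prov}, which concerns free objects over locally finite pseudovarieties; the relevant fact is the paper's earlier observation that the two-sided Karnofsky--Rhodes expansion of a finite semigroup is naturally a two-sided semidirect product, hence lies in $\ell\pv{I}\dast\pv W$ whenever the base lies in $\pv W$.

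For the reverse inclusion $\pv{LI}\malcev\pv V\subseteq\bigcup_{n\geq 0}\ell\pv{I}\dast^n\pv V$, however, your proposal is only a plan. The uniform bound on the number of iterated expansions needed --- the stabilization you correctly identify as the combinatorial core --- is exactly the content of the theorem, and you explicitly delegate it to \cite{Rhodes&Tilson:1989,Rhodes&Weil:1989b}. As an independent proof the proposal therefore has a genuine gap in that direction; as a reading of the statement it is consistent with the paper, which likewise imports the result as a black box rather than proving it.
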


Say that a pseudovariety of semigroups $\pv V$ is
\emph{closed under the two-sided Karnofsky-Rhodes expansion}
if we have $S_\varphi^\krho\in \pv V$ whenever $S\in\pv V$
and $\varphi$ is a homomorphism from a finitely generated free semigroup onto $S$.
For the reader's convenience, we give a proof of the following easy
consequence of Theorem~\ref{t:iteration-of-semidirect-product}.

  \begin{Cor}\label{c:closure-under-Karnofsky-Rhodes-expansion}
    A pseudovariety of semigroups $\pv V$ is closed under the
    two-sided Karnofsky-Rhodes expansion if and only if
    $\pv V=\pv {LI}\malcev \pv V$.
  \end{Cor}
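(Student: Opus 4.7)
The plan is to prove the two implications separately. The direction $(\Leftarrow)$ is essentially immediate from material already assembled in the preparatory section, whereas $(\Rightarrow)$ reduces, via Theorem~\ref{t:iteration-of-semidirect-product}, to the single inclusion $\loc1\dast\pv V\subseteq\pv V$, which in turn will be obtained by applying Proposition~\ref{p:expansion-of-free-prov} to a well chosen locally finite sub-pseudovariety of~$\pv V$.

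For $(\Leftarrow)$ I would assume $\pv V=\pv{LI}\malcev\pv V$ and consider a finite $S\in\pv V$ together with an onto morphism $\varphi\colon A^+\to S$ from a finite alphabet. The discussion preceding Proposition~\ref{p:expansion-of-free-prov} realises $S^\krho$ as the two-sided semidirect product $(\overline\Omega_{\Gamma_\varphi}\loc1)\dast S$, placing it in $\loc1\dast\pv V$; the excerpt also records $\loc1\dast\pv V\subseteq\pv{LI}\malcev\pv V$, and combining this with the hypothesis yields $S^\krho\in\pv V$, as required.

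For $(\Rightarrow)$ I would assume that $\pv V$ is closed under the two-sided Karnofsky-Rhodes expansion. Only the inclusion $\pv{LI}\malcev\pv V\subseteq\pv V$ needs work, and by Theorem~\ref{t:iteration-of-semidirect-product} it suffices to check that $\loc1\dast^n\pv V\subseteq\pv V$ for every $n\geq 0$. A routine induction on $n$, using the monotonicity of $\dast$ in its second argument at the inductive step, reduces everything to the single case $\loc1\dast\pv V\subseteq\pv V$. For that base case, I would take a finite $A$-generated semigroup $T\in\loc1\dast\pv V$; using that $\loc1\dast\pv V$ is, as a pseudovariety, generated by two-sided semidirect products of finite semigroupoids in $\loc1$ with finite semigroups in $\pv V$, together with closure of pseudovarieties under finite direct products, I would extract a single finite $S\in\pv V$ such that $T\in\loc1\dast\langle S\rangle$, where $\langle S\rangle$ is the (locally finite) pseudovariety generated by~$S$. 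Proposition~\ref{p:expansion-of-free-prov} applied to $\langle S\rangle$ then yields
\begin{equation*}
  \bigl(\overline\Omega_A\langle S\rangle\bigr)^\krho\;\cong\;\overline\Omega_A(\loc1\dast\langle S\rangle);
\end{equation*}
since $\overline\Omega_A\langle S\rangle$ is a finite member of $\langle S\rangle\subseteq\pv V$, the closure hypothesis gives $(\overline\Omega_A\langle S\rangle)^\krho\in\pv V$, and since $T$ is an $A$-generated element of $\loc1\dast\langle S\rangle$ it is a quotient of the right-hand side above, whence $T\in\pv V$.

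The one step I expect to require care is the compactness argument extracting a single finite witness $S\in\pv V$ with $T\in\loc1\dast\langle S\rangle$ from $T\in\loc1\dast\pv V$; this is the only place where one must reach outside the statements already recorded in the excerpt. Everything else is a direct application of Theorem~\ref{t:iteration-of-semidirect-product} and Proposition~\ref{p:expansion-of-free-prov}, plus routine bookkeeping with pseudovariety closures.
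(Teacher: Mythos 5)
Your proposal is correct and follows essentially the same route as the paper: the forward direction via $S^\krho\in\ell\pv I\dast\pv V\subseteq\pv{LI}\malcev\pv V$, and the converse by reducing, through Theorem~\ref{t:iteration-of-semidirect-product}, to the single inclusion $\ell\pv I\dast\pv V\subseteq\pv V$, obtained by applying Proposition~\ref{p:expansion-of-free-prov} to locally finite subpseudovarieties of $\pv V$. The compactness step you flag (passing to the locally finite pseudovariety generated by a single finite $S\in\pv V$) is valid and is exactly what the paper elides with the phrase ``as $\pv W$ can be any locally finite subpseudovariety of $\pv V$''.
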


  \begin{proof}
    Suppose that $\pv V=\pv {LI}\malcev \pv V$.
    Let $S\in \pv V$ and
    consider a homomorphism $\varphi\colon A^+\to S$.  Using the aforementioned fact
    that $S_\varphi^\krho\in \op xyz=xz\cl \malcev \pv V$,
    we immediately get that $S_\varphi^\krho\in \pv V$,
    because $\op xyz=xz\cl\subseteq \pv {LI}$.
    Alternatively, one can apply
    (the easy part of) Theorem~\ref{t:iteration-of-semidirect-product},
    since $S_\varphi^\krho\in \ell \pv I \dast\pv V$.   
    
    Conversely, suppose that $\pv V$ is closed under the
    two-sided Karnofsky-Rhodes expansion. Let $\pv W$ be
    a locally finite subpseudovariety of $\pv V$.
    Then $\Om AW$ belongs to $\pv V$. By hypothesis,
    $(\Om AW)^\krho$ also belongs to $\pv V$.
    Applying Proposition~\ref{p:expansion-of-free-prov},
    we conclude that $\ell{\pv I \dast\pv W}\subseteq \pv V$.
    As $\pv W$ can be any
  locally finite subpseudovariety
  of $\pv V$, we actually have
  $\ell{\pv I \dast\pv V}\subseteq \pv V$.
  We deduce from
  Theorem~\ref{t:iteration-of-semidirect-product}
  that $\pv {LI}\malcev \pv V=\pv V$.
  \end{proof}
  
\section{The two-sided connected expansion}

In this section we show that the pseudovarieties closed under the
two-sided Karnofsky-Rhodes expansion are equidivisible. Actually, it
is not necessary to use the full force of the definition of the
expansion. It suffices to use a weaker expansion which we introduce in
this section.

Let $\varphi$ be a homomorphism from $A^+$ onto
a semigroup $S$.
Given a path $p$ in the two-sided Cayley graph $\Gamma_\varphi$,
denote by $C(p)$ the set of
strongly connected components of $\Gamma_\varphi$
that contain some vertex in $p$.
Let $\approx_\varphi$ be the binary relation on $A^+$ defined by
$u\approx_\varphi v$ if and only if
$\varphi(u)=\varphi(v)$ and $C(p_u)=C(p_v)$.

\begin{Lemma}\label{l:equidivisible-congruence}
  The relation $\approx_\varphi$ is a congruence.
\end{Lemma}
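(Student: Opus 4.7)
The plan is to verify that $\approx_\varphi$ is an equivalence (immediate from the definition) and then prove left and right compatibility with concatenation; by symmetry we may focus on the left case, showing that $u\approx_\varphi v$ implies $wu\approx_\varphi wv$ for every $w\in A^+$. Preservation of $\varphi$-values is automatic, so the real content is to show $C(p_{wu})=C(p_{wv})$.

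First I would analyze the shape of $p_{wu}$. Reading $w=a_1\cdots a_k$ from the initial vertex $(I,\varphi(wu))$, the second coordinate at step $i$ is $\varphi(a_{i+1}\cdots a_k u)$, which depends on $u$ only through $\varphi(u)$. Hence $p_{wu}$ factors as a ``$w$-portion'' going from $(I,\varphi(wu))$ to $(\varphi(w),\varphi(u))$ followed by a ``$u$-portion'' going from $(\varphi(w),\varphi(u))$ to $(\varphi(wu),I)$. Because $\varphi(u)=\varphi(v)$, the $w$-portions of $p_{wu}$ and $p_{wv}$ are literally the same path in $\Gamma_\varphi$, so they contribute the same strongly connected components to $C(p_{wu})$ and $C(p_{wv})$.

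Next I would compare the $u$- and $v$-portions through the map $\rho_w\colon (s,t)\mapsto(\varphi(w)s,t)$ on vertices. A direct check on the edge conditions $s_1\varphi(a)=s_2$ and $t_1=\varphi(a)t_2$ shows that $\rho_w$ is a graph endomorphism of $\Gamma_\varphi$, and by construction it sends the path $p_u$ onto the $u$-portion of $p_{wu}$, and $p_v$ onto the $v$-portion of $p_{wv}$. Since a graph morphism preserves reachability, vertices that lie in a common strongly connected component are sent to vertices that lie in a common strongly connected component, so $\rho_w$ induces a well-defined map $C(p_u)\to C(\rho_w(p_u))$ sending the class of $x$ to the class of $\rho_w(x)$, and this induced map is surjective by construction. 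Using the hypothesis $C(p_u)=C(p_v)$, any strongly connected component $Y\in C(\rho_w(p_u))$ contains $\rho_w(x)$ for some vertex $x$ of $p_u$; its component in $\Gamma_\varphi$ lies in $C(p_u)=C(p_v)$, so it also contains some vertex $x'$ of $p_v$, and then $\rho_w(x')\in Y$ witnesses $Y\in C(\rho_w(p_v))$. Hence $C(\rho_w(p_u))=C(\rho_w(p_v))$, and combining with the equality on the common $w$-portion yields $C(p_{wu})=C(p_{wv})$.

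Right compatibility is entirely symmetric, using instead the graph endomorphism $\lambda_w\colon(s,t)\mapsto(s,t\varphi(w))$ to transport $p_u$ and $p_v$ to the initial portions of $p_{uw}$ and $p_{vw}$, while the common ``$w$-portion'' now sits at the end and coincides in both paths because $\varphi(u)=\varphi(v)$. The only step that requires a moment of care is the well-definedness of the induced map on components, i.e.\ the observation that any graph morphism respects strong connectivity; everything else is bookkeeping about how $p_{wu}$ concatenates out of the two halves. There are no deep obstacles, but the decomposition of $p_{wu}$ and the reliance on $\varphi(u)=\varphi(v)$ to identify the second coordinates along the $w$-portion are the substantive points, and they must be made explicit to see why the argument goes through for the coarser invariant $C(\cdot)$ in exactly the same way as it does for the finer invariant $T(\cdot)$ used for $\equiv_\varphi$.
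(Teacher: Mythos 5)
Your argument is correct and follows essentially the same route as the paper's proof: both decompose $p_{wu}$ (resp.\ $p_{uw}$) into a portion shared literally with $p_{wv}$ (resp.\ $p_{vw}$), thanks to $\varphi(u)=\varphi(v)$, plus a portion obtained from $p_u$, and then transport the connecting paths witnessing $C(p_u)=C(p_v)$ to that remaining portion. The only difference is one of packaging: where the paper checks by hand, via the relations $\varphi(u_2w)=\varphi(tv_2w)$ and $\varphi(v_2w)=\varphi(zu_2w)$, that the labels $t,z$ of the connecting paths still label paths after appending $w$, you observe once and for all that $(s,t)\mapsto(s,t\varphi(w))$ and $(s,t)\mapsto(\varphi(w)s,t)$ are graph endomorphisms of $\Gamma_\varphi$ and hence preserve strong connectivity.
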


\begin{proof}
  The relation $\approx_\varphi$ is clearly an equivalence.
  Taking into account the symmetry of its definition,
  to prove that $\approx_\varphi$ is a congruence, it suffices to show
  that $C(p_{uw})=C(p_{vw})$ whenever $u,v,w\in A^+$ are such
  that $u\approx_\varphi v$. Let $x$ be a vertex of $p_{uw}$
  that is not in $p_{vw}$, where $u\approx_\varphi v$. Then, as $\varphi(u)=\varphi(v)$, we necessarily have
  $x=(\varphi(u_1),\varphi(u_2w))$ for some $u_1,u_2\in A^+$ such that
  $u=u_1u_2$. Since $x'=(\varphi(u_1),\varphi(u_2))$ is a vertex of
  $p_u$, there is some vertex $x''$ in $p_v$ such that $x'$ and $x''$ are in the same strongly connected component.
  Let $t$ be the label of a path from $x'$ to $x''$, and let $z$
  be the label of a path from $x''$ to $x'$.
  We have $x''=(\varphi(v_1),\varphi(v_2))$
  for some $v_1,v_2\in A^\ast$ such that $v=v_1v_2$.
    Note that
  \begin{equation}\label{eq:tz}
    \varphi(u_1t)=\varphi(v_1),\;
    \varphi(u_2)=\varphi(tv_2),\;
    \varphi(v_1z)=\varphi(u_1),\;
    \varphi(v_2)=\varphi(zu_2).
  \end{equation}
  Consider the vertex $x'''=(\varphi(v_1),\varphi(v_2w))$.
  Looking at~\eqref{eq:tz}, we see that we also have
  $\varphi(u_2w)=\varphi(tv_2w)$
  and $\varphi(v_2w)=\varphi(zu_2w)$,
   whence there is a path from $x$ to $x'''$ labeled $t$,
  and a path from $x'''$ to $x$ labeled~$z$. This shows that $x'''$
  belongs to the strongly connected component of $x$. Since $x'''$
  belongs to $p_{vw}$, this establishes the inclusion
  $C(p_{uw})\subseteq C(p_{vw})$. Dually, we
  have $C(p_{vw})\subseteq C(p_{uw})$.
\end{proof}

   If $S$ is finite then $\approx_\varphi$ has finite index,
 because a $\approx_\varphi$-class is defined by a kernel class
 of $\varphi$ together with a set of strongly connected components
 of~$\Gamma_\varphi$, and there is only a finite number of such classes
 and components.

We denote by $S^C$ the quotient $A^+/{\approx_\varphi}$ and by
$\varphi^C$ the canonical homomorphism $A^+\to S^C$.
Clearly, if $\varphi^\krho(u)=\varphi^\krho(v)$ then
$\varphi^C(u)=\varphi^C(v)$, and so
$S^C$ is a quotient of $S^\krho$. The semigroups
$S^C$ and $S^\krho$ may not be isomorphic. For example, consider
the onto homomorphism $\varphi$ from the
two-letter alphabet $A=\{a,b\}$ onto the trivial semigroup $S$.
Then $\varphi^C(a)=\varphi^C(b)$ but $\varphi^\krho(a)\neq \varphi^\krho(b)$.

\begin{Prop}
  The correspondence
  $(S,\varphi)\mapsto (S^C,\varphi^C)$ is an expansion cut to generators.
\end{Prop}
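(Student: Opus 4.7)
The plan is to verify the two requirements in the definition of an expansion cut to generators: that $(S,\varphi)\mapsto(S^C,\varphi^C)$ extends to an endofunctor of $\Cl S_A$, and that the canonical quotient maps $S^C\to S$ form a natural transformation to the identity functor. Since Lemma~\ref{l:equidivisible-congruence} already establishes that $\approx_\varphi$ is a congruence, $\varphi^C\colon A^+\to S^C$ is an onto homomorphism, so $(S^C,\varphi^C)$ is a bona fide object of $\Cl S_A$.

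First I would define the action on morphisms. Given $\theta\colon(S,\varphi)\to(T,\psi)$ with $\theta\circ\varphi=\psi$, extend $\theta$ to $S^I\to T^I$ by $\theta(I)=I$. This extension induces a label-preserving morphism of semi-automata $\hat\theta\colon\Gamma_\varphi\to\Gamma_\psi$ defined on vertices by $(s,t)\mapsto(\theta(s),\theta(t))$, and $\hat\theta$ carries the path $p_{\varphi,u}$ onto $p_{\psi,u}$ for every $u\in A^+$. Since $\hat\theta$ sends paths to paths with the same label, it also sends vertices in a common strongly connected component of $\Gamma_\varphi$ to vertices in a common strongly connected component of $\Gamma_\psi$. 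Using this, I would show that $u\approx_\varphi v$ implies $u\approx_\psi v$: every vertex $y$ of $p_{\psi,u}$ is the image under $\hat\theta$ of a vertex $x$ of $p_{\varphi,u}$; by hypothesis there is a vertex $x''$ of $p_{\varphi,v}$ in the same strongly connected component as $x$, and then $\hat\theta(x'')$ lies on $p_{\psi,v}$ in the same strongly connected component as $y$. Symmetry then gives $C(p_{\psi,u})=C(p_{\psi,v})$, so the required homomorphism $\theta^C\colon S^C\to T^C$, satisfying $\theta^C\circ\varphi^C=\psi^C$, is well defined and is a morphism of $\Cl S_A$.

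Functoriality ($\mathrm{id}^C=\mathrm{id}$ and $(\theta_2\theta_1)^C=\theta_2^C\theta_1^C$) then follows at once from the universal property expressed by $\theta^C\circ\varphi^C=\psi^C$ together with surjectivity of $\varphi^C$. For the natural transformation, the inclusion $\approx_\varphi\,\subseteq\,\ker\varphi$ produces the canonical quotient $\pi_{(S,\varphi)}\colon S^C\to S$ satisfying $\pi_{(S,\varphi)}\circ\varphi^C=\varphi$, which makes it a morphism in $\Cl S_A$; naturality, i.e.\ $\theta\circ\pi_{(S,\varphi)}=\pi_{(T,\psi)}\circ\theta^C$, is verified by evaluating both sides on an arbitrary generator $\varphi^C(u)$.

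The only non-routine point is the preservation of strong connectivity under $\hat\theta$ used in the morphism construction, and even this reduces to the observation that $\hat\theta$ transports paths to paths with the same label; everything else is formal bookkeeping.
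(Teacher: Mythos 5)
Your proof is correct and follows essentially the same route as the paper's: the core step in both is the induced label-preserving semi-automaton morphism $\Gamma_\varphi\to\Gamma_\psi$, which carries $p_{\varphi,u}$ to $p_{\psi,u}$ and preserves strongly connected components, yielding $u\approx_\varphi v\Rightarrow u\approx_\psi v$ and hence the induced homomorphism $S^C\to T^C$. Your explicit verification of functoriality and of the naturality of the quotient maps merely spells out routine bookkeeping that the paper leaves implicit.
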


\begin{proof}
  Let $\varphi\colon A^+\to S$ and $\psi\colon A^+\to T$ be onto
  homomorphisms, and let $f\colon S\to T$ be a homomorphism such that
  $f\circ\varphi=\psi$.
  The mapping $f$ induces a homomorphism $\bar f$ of semi-automata
  from $\Gamma_\varphi$ to $\Gamma_\psi$, defined by the following
  mappings from vertices and edges of~$\Gamma_\varphi$ respectively to
  vertices and edges of~$\Gamma_\psi$:
  \begin{equation*}
        (s,t)\mapsto (f(s),f(t)),\qquad
    ((s,t),a,(s',t'))\mapsto ((f(s),f(t)),a,(f(s'),f(t'))).
  \end{equation*}
  Let $u,v\in A^+$ be such that $u\approx_\varphi v$.
  Then we have $\varphi(u)=\varphi(v)$ and $\psi(u)=\psi(v)$.
  Let $y=(\psi(u_1),\psi(u_2))$ be a vertex in $p_u^\psi$,
  where
  $u=u_1u_2$, with $u_1,u_2\in A^\ast$. Then $y=\bar f(x)$,
  where $x=(\varphi(u_1),\varphi(u_2))$ is a vertex in $p_u^\varphi$.  
  Since $u\approx_\varphi v$, there is a vertex $x'$
  in  $p_v^\varphi$ such that $x$ and $x'$
  are in the same strongly connected component
  of $\Gamma_\varphi$.
  Clearly, $y=\bar f(x)$ and $y'=\bar f(x')$
  are in the same strongly connected component of $\Gamma_\psi$.
  As $x'$ is in $p_v^\varphi$,
  we have $y'$ in $p_v^\psi$, showing that
  $C(p_u^\psi)\subseteq C(p_v^\psi)$.
  By symmetry, we have $C(p_u^\psi)\supseteq C(p_v^\psi)$.
  This establishes the equality $u\approx_\psi v$, and therefore
  we may consider the unique semigroup
  homomorphism $f^C\colon S^C\to T^C$
  such that $f^C\circ \varphi^C=\psi^C$.
\end{proof}

Let us call the expansion $(S,\varphi)\mapsto
(S^C,\varphi^C)$ the \emph{two-sided connected expansion}.

A pseudovariety of semigroups $\pv V$
is \emph{closed under two-sided connected expansion}
if we have $S^C$ whenever $S\in\pv V$.

\begin{Prop}\label{p:equidivisibility-under-connected-expansion}
  If\/ $\pv V$ is a pseudovariety of semigroups closed under two-sided
  connected expansion then $\pv V$ is equidivisible.
\end{Prop}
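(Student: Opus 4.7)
The plan is to fix a finite alphabet $A$, take $u,v,x,y\in\Om AV$ with $uv=xy$, and produce $t\in(\Om AV)^I$ satisfying either $ut=x,\ v=ty$ or $xt=u,\ y=tv$. The strategy is to construct divisor candidates inside each finite $A$-generated quotient of $\Om AV$ in $\pv V$ and then assemble them into a profinite element by a projective-limit/compactness argument.

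For the local construction, fix a finite quotient $\pi_S\colon\Om AV\twoheadrightarrow S$ with $S\in\pv V$. Closure under two-sided connected expansion gives $S^C\in\pv V$, so by the universal property of $\Om AV$ there is a continuous homomorphism $\hat\pi\colon\Om AV\to S^C$ extending the canonical map $\varphi^C\colon A^+\to S^C$, and $\pi\circ\hat\pi=\pi_S$ where $\pi\colon S^C\to S$ is the natural projection. Picking word representatives $U,V,X,Y\in A^+$ of $\hat\pi(u),\hat\pi(v),\hat\pi(x),\hat\pi(y)$ in the $\approx_\varphi$-quotient, the relation $uv=xy$ forces $UV\approx_\varphi XY$, hence $C(p_{UV})=C(p_{XY})$ in $\Gamma_\varphi$, while $\varphi(U)=\bar u,\ \varphi(V)=\bar v,\ \varphi(X)=\bar x,\ \varphi(Y)=\bar y$, where $\bar u=\pi_S(u)$ and so on.

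Now I would extract the divisor in $S^I$. The vertex $(\bar u,\bar v)$ lies on $p_{UV}$, so its strongly connected component meets $p_{XY}$ at some vertex $(s,t)=(\varphi(w_1),\varphi(w_2))$ for a factorization $XY=w_1w_2$. When $w_1$ is a prefix of $X$, write $X=w_1w_1'$; taking a path from $(\bar u,\bar v)$ to $(s,t)$ of label $r\in A^*$ (possibly trivial, so that $r$ is empty and the path has length zero) and applying the defining edge identities $s_1\varphi(a)=s_2,\ t_1=\varphi(a)t_2$ yields $\bar u\cdot\varphi(rw_1')=\bar x$ and $\bar v=\varphi(rw_1')\cdot\bar y$, so $\tau:=\varphi(rw_1')\in S^I$ (equal to $I$ when the word is empty) witnesses the first factorization. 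Symmetrically, when $w_1$ extends past $X$, say $w_1=Xw_2'$ with $Y=w_2'w_2$, a path from $(s,t)$ back to $(\bar u,\bar v)$ of label $r'$ yields $\tau:=\varphi(w_2'r')\in S^I$ witnessing the second factorization.

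For the global assembly, let $E_S^{(1)},E_S^{(2)}\subseteq S^I$ denote the sets of elements realizing respectively the first and second factorization in $S$. The local step gives $E_S^{(1)}\cup E_S^{(2)}\neq\emptyset$ for every such $S$, and the extended quotient maps $(S')^I\to S^I$ send $E_{S'}^{(i)}$ into $E_S^{(i)}$. If $E_{S_1}^{(1)}$ and $E_{S_2}^{(2)}$ were both empty for some $S_1,S_2$, any common refinement $S'$ would have both $E_{S'}^{(i)}$ empty, contradicting the local step; so there is $i\in\{1,2\}$ with $E_S^{(i)}\neq\emptyset$ for every finite $\pv V$-quotient $S$, and the projective limit of the finite non-empty sets $E_S^{(i)}$ delivers $t\in(\Om AV)^I$ satisfying the $i$-th factorization (checked by projecting to each $S$). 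The main technical obstacle I expect is the Cayley-graph case analysis extracting $\tau$ from a shared strongly connected component, particularly the bookkeeping around trivial paths and empty factor words so that the formula $\tau\in S^I$ yields the correct equations.
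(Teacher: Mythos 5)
Your proof is correct and follows essentially the same route as the paper's: closure under the two-sided connected expansion forces the vertices $(\bar u,\bar v)$ and $(\bar x,\bar y)$ in the two-sided Cayley graph of each finite quotient to be joined by a directed path in one direction or the other, which yields a local witness in $S^I$, and a compactness argument over the finite quotients then globalizes it. The only immaterial difference is the packaging of the compactness step: the paper pigeonholes the two factorization types along a countable inverse system and takes an accumulation point of the witnesses, whereas you take a projective limit of the nonempty finite witness sets.
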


\begin{proof}
    Let $A$ be a finite alphabet. Suppose $u,v,x,y$ are elements
  of $\Om AV$ such that $uv=xy$. Let $\Phi$ be a continuous 
  homomorphism onto a semigroup~$S$ from~$\pv V$,
  and let $\varphi$ be its restriction
  to~$A^+$.
  Denote by $\Phi^C$ the unique continuous homomorphism
  from \/$\Om AV$ onto $S^C$
  whose restriction to $A^+$ coincides with
  $\varphi^C$.
  Consider elements $u_0,v_0,x_0,y_0$ of $A^+$ such that
  $\varphi^C(u_0)=\Phi^C(u)$,
  $\varphi^C(v_0)=\Phi^C(v)$,
  $\varphi^C(x_0)=\Phi^C(x)$
  and $\varphi^C(y_0)=\Phi^C(y)$.
  The vertex $(\Phi(u),\Phi(v))=(\varphi(u_0),\varphi(v_0))$ belongs
  to the path
  $(I,\varphi(u_0v_0))\xrightarrow{u_0v_0}(\varphi(u_0v_0),I)$
  of the two-sided Cayley graph~$\Gamma_\varphi$.
  Since $uv=xy$, we know that
  $\Phi^C(uv)=\Phi^C(xy)$. Therefore, there is
  a vertex $(r,s)$ in the path
  $(I,\varphi(u_0v_0))\xrightarrow{x_0y_0}(\varphi(u_0v_0),I)$
  of $\Gamma_\varphi$ which lies in the strongly connected component
  of $(\Phi(u),\Phi(v))$. Since $(\Phi(x),\Phi(y))$ is clearly also
  in the path
  $(I,\varphi(u_0v_0))\xrightarrow{x_0y_0}(\varphi(u_0v_0),I)$,
  we conclude that in $\Gamma_\varphi$ there is a (possibly empty) path from
  $(\Phi(u),\Phi(v))$ to $(\Phi(x),\Phi(y))$ or a path
  from $(\Phi(x),\Phi(y))$ to $(\Phi(u),\Phi(v))$.

  Therefore, there is a word $t_\Phi\in A^\ast$ such that
  \begin{equation*}
    \begin{cases}
      \Phi(ut_\Phi)=\Phi(x)\\
      \Phi(v)=\Phi(t_\Phi y),
    \end{cases}
  \end{equation*}
  in which case we say that $\Phi$ is \emph{of the first type}, or
  there is a word $\tau_\Phi\in A^\ast$
  such that
  \begin{equation*}
        \begin{cases}
      \Phi(x\tau_\Phi)=\Phi(u)\\
      \Phi(y)=\Phi(\tau_\Phi v),\\
    \end{cases}
  \end{equation*}
  in which we say that $\Phi$ is \emph{of the second type}.
  Note that $\Phi$ can be simultaneously of the first type and of the second type.
  The result now follows from a standard argument, which we write down
  for the reader's convenience. We know that $\Om AV$ is the inverse limit
  of an inverse system of
  semigroups from~$\pv V$ defined by a countable set of connecting
  onto homomorphisms of the form $\pi_{m,n}\colon S_m\to S_n$,
  where $m,n$ are arbitrary positive integers with $m\geq n$.
  For each $n\geq 1$,
  let $\pi_n$ be the projection $\Om AV\to S_n$
  associated to this inverse system.
  Note that, for each $n\leq m$,
  the homomorphism $\pi_n$ is of the same type as $\pi_m$.
  On the other hand, since there are only two types,
  at least one of them occurs infinitely often.
  Combining these two simple observations we
  conclude that
  $\pi_n$ is of the first type
  for every $n\geq 1$, or of the second type
    for every $n\geq 1$.
  Without loss of generality, we assume the former case.
  Denote $t_{\pi_n}$ by $t_n$. We have
    \begin{equation}\label{eq:type-1}
      \pi_n(ut_n)=\pi_n(x)\quad\text{and}\quad
      \pi_n(v)=\pi_n(t_ny),
  \end{equation}
  for every $n\geq 1$.  
  Let $t$ be an accumulation
  point in $(\Om AV)^I$ of the sequence $(t_n)_{n}$.
  Fix $k\geq 1$, and let $n\geq k$.
  Applying $\pi_{n,k}$ to~\eqref{eq:type-1},
  we get
    \begin{equation*}
      \pi_k(ut_n)=\pi_k(x)\quad\text{and}\quad
      \pi_k(v)=\pi_k(t_ny),
  \end{equation*}
  for every $n\geq k$. By continuity of $\pi_k$,
  we obtain
      \begin{equation*}
      \pi_k(ut)=\pi_k(x)\quad\text{and}\quad
      \pi_k(v)=\pi_k(ty).
    \end{equation*}
    This implies $ut=x$ and $v=ty$.
    The case where
    $\pi_n$ is of the second type for every $n\geq 1$
    leads to the existence of $\tau$ in
    $(\Om AV)^I$ such that $x\tau=u$ and $y=\tau v$.
\end{proof}

\begin{Cor}\label{c:equidivisibility-under-connected-expansion}
  If\/ $\pv V$ is a pseudovariety of semigroups closed under 
  two-sided Karnofsky-Rhodes expansion then $\pv V$ is equidivisible.
\end{Cor}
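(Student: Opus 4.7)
The plan is to reduce this corollary immediately to Proposition~\ref{p:equidivisibility-under-connected-expansion} by showing that closure under the two-sided Karnofsky-Rhodes expansion forces closure under the two-sided connected expansion. The crucial fact, already recorded in the paragraph preceding the proposition, is that for every onto homomorphism $\varphi\colon A^+\to S$ with $A$ finite, the congruence $\equiv_\varphi$ refines $\approx_\varphi$, so that $S^C$ is a homomorphic image of $S^\krho$.

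First I would take $S\in\pv V$ and a homomorphism $\varphi\colon A^+\to S$ from a finitely generated free semigroup onto $S$. By hypothesis $S^\krho\in\pv V$. Then, using the canonical surjection $S^\krho\twoheadrightarrow S^C$ induced by the inclusion ${\equiv_\varphi}\subseteq{\approx_\varphi}$, and the fact that pseudovarieties are closed under homomorphic images, I conclude $S^C\in\pv V$. Thus $\pv V$ is closed under the two-sided connected expansion. Proposition~\ref{p:equidivisibility-under-connected-expansion} then yields that $\pv V$ is equidivisible.

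There is really no obstacle here: the only step requiring a brief justification is the observation that $\varphi^\krho(u)=\varphi^\krho(v)$ implies $\varphi^C(u)=\varphi^C(v)$, which is immediate because two vertices linked by a transition edge of $\Gamma_\varphi$ lie in distinct strongly connected components, hence any path with the same set of transition edges visits the same set of strongly connected components. Everything else is a direct invocation of the previous proposition.
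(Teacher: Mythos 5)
Your proof is correct and follows essentially the same route as the paper's: both reduce the corollary to Proposition~\ref{p:equidivisibility-under-connected-expansion} via the observation that $S^C$ is a homomorphic image of $S^\krho$, using closure of pseudovarieties under homomorphic images. Your added justification that $\equiv_\varphi$ refines $\approx_\varphi$ is sound (non-transition edges stay within a strongly connected component, so the components met by $p_u$ are determined by its initial vertex together with the targets of its transition edges, and $p_u$, $p_v$ share both), and is in fact slightly more explicit than the paper, which asserts this refinement without proof.
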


\begin{proof}
  It follows immediately from Proposition~\ref{p:equidivisibility-under-connected-expansion}
  and the fact that the two-sided connected expansion of a semigroup $S$ is a homomorphic image
  of the two-sided Karnofsky-Rhodes expansion of $S$.
\end{proof}

\begin{Cor}\label{c:equidivisibility-of-LImV}
  If $\pv V$ is a pseudovariety of semigroups such that
  $\pv V=\pv {LI}\malcev \pv V$ then $\pv V$ is equidivisible.
\end{Cor}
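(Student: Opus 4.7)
The plan is to deduce this corollary almost immediately by chaining together the two results just proved, namely Corollary~\ref{c:closure-under-Karnofsky-Rhodes-expansion} (which provides the bridge between the Mal'cev product condition and closure under the two-sided Karnofsky-Rhodes expansion) and Corollary~\ref{c:equidivisibility-under-connected-expansion} (which transports closure under that expansion into equidivisibility).

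Concretely, assume $\pv V=\pv{LI}\malcev\pv V$. First I would invoke Corollary~\ref{c:closure-under-Karnofsky-Rhodes-expansion} to conclude that $\pv V$ is closed under the two-sided Karnofsky-Rhodes expansion: for every $S\in\pv V$ and every homomorphism $\varphi\colon A^+\to S$ from a finitely generated free semigroup, the semigroup $S_\varphi^{\krho}$ lies in $\pv V$. Then I would feed this directly into Corollary~\ref{c:equidivisibility-under-connected-expansion} to obtain that $\pv V$ is equidivisible.

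Since the two corollaries are stated in exactly the form needed, there is essentially no obstacle; the only thing worth being explicit about is that the implication runs in one direction only (we need the ``only if'' half of Corollary~\ref{c:closure-under-Karnofsky-Rhodes-expansion}, which follows from the easy part of Theorem~\ref{t:iteration-of-semidirect-product} as already pointed out), and that Corollary~\ref{c:equidivisibility-under-connected-expansion} in turn relies on the chain Proposition~\ref{p:equidivisibility-under-connected-expansion} applied to the homomorphic image $S^C$ of $S^{\krho}$. The proof will therefore be a one- or two-line citation of these two corollaries, with no further calculation needed.
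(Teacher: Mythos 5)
Your proposal is correct and is essentially identical to the paper's own proof, which simply cites Corollary~\ref{c:equidivisibility-under-connected-expansion} together with the easy direction of Corollary~\ref{c:closure-under-Karnofsky-Rhodes-expansion}. (Minor quibble: the direction you need is the ``if'' half of that biconditional, i.e.\ from $\pv V=\pv{LI}\malcev\pv V$ to closure under the expansion, not the ``only if'' half as you label it, but you clearly identify the right implication in substance.)
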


\begin{proof}
  Apply
  Corollary~\ref{c:equidivisibility-under-connected-expansion}
  and (the easy part of) Corollary~\ref{c:closure-under-Karnofsky-Rhodes-expansion}.
\end{proof}

\section{Equidivisible subpseudovarieties of \protect{\pv {CR}}}

It was proved in~\cite{McKnight&Storey:1969}
that every completely simple semigroup is
equidivisible. In fact, the following
stronger result was established.

\begin{Thm}\label{t:McKnight-Storey}
  A semigroup $S$ is completely simple if and only if for every
  $u,v,x,y\in S$, the equality $uv=xy$ implies the existence of
  $t_1,t_2\in S^I$ such that $ut_1=x$, $t_1y=v$, $u=xt_2$, and
  $y=t_2v$.
\end{Thm}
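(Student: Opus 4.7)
The plan is to prove the two implications separately. For the direct implication I would use the representation of a completely simple semigroup as a Rees matrix semigroup $\mathcal{M}(G;J,\Lambda;P)$ over a group~$G$ (using $J$ for the left index set to avoid clash with the adjoined identity in $S^I$). Representing $u,v,x,y$ as triples in $J\times G\times\Lambda$, the equation $uv=xy$ forces the first coordinates of $u$ and $x$ to agree, the third coordinates of $v$ and $y$ to agree, and a single equation in the group~$G$ between the middle coordinates. Explicit candidates for $t_1$ and $t_2$ are then read off by choosing their first/third coordinates to match those of $v,x$ (respectively $y,u$) and solving the middle coordinate from the group equation; verification of $ut_1=x$, $t_1y=v$, $xt_2=u$, $t_2v=y$ is then routine multiplication in the Rees product.

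For the converse, assume the divisibility property stated in the theorem. The plan is to prove that $S$ is completely regular and simple, and then invoke the standard fact that a completely regular semigroup with a single $\Cl J$-class is completely simple (as it coincides with its unique $\Cl D$-class, which always carries the structure of a Rees matrix semigroup over a group). For complete regularity, given $s\in S$ apply the hypothesis to the equation $s\cdot s^2=s^2\cdot s$ with $u=s$, $v=s^2$, $x=s^2$, $y=s$: this produces $t_2\in S^I$ with $s=s^2t_2$ and $s=t_2 s^2$, so that $s\in s^2 S^I\cap S^I s^2$, hence $s\mathrel{\Cl H}s^2$; this is well known to be equivalent to $s$ lying in a subgroup. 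For simplicity, given $a,b\in S$ apply the hypothesis to the trivial equation $ba\cdot b=b\cdot ab$, obtaining $t_1\in S^I$ with $bat_1=b$ and $t_1 ab=b$; hence $b\mathrel{\Cl R}ba$ and $b\mathrel{\Cl L}ab$. Interchanging $a$ and $b$ (i.e.\ applying the same argument to $ab\cdot a=a\cdot ba$) yields $a\mathrel{\Cl R}ab$, so $a\mathrel{\Cl R}ab\mathrel{\Cl L}b$, and therefore $a\mathrel{\Cl D}b$. Thus all pairs of elements are $\Cl D$-related, so $S$ has a single $\Cl J$-class and is simple.

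The real difficulty in the converse is spotting the two trivial equations to feed into the hypothesis: $s\cdot s^2=s^2\cdot s$ for complete regularity and $ba\cdot b=b\cdot ab$ for $\Cl D$-equivalence. Once these are identified the remainder is routine manipulation with Green's relations together with the structural fact invoked at the start of the converse; the forward direction is, by contrast, bookkeeping in the Rees representation.
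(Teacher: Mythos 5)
The paper does not prove this theorem at all: it is quoted verbatim from McKnight and Storey \cite{McKnight&Storey:1969}, so there is no in-paper argument to compare yours against. Your proof is correct and self-contained. In the forward direction the Rees coordinates work out exactly as you say: writing $u=(i_1,g_1,\lambda_1)$, $v=(i_2,g_2,\lambda_2)$, $x=(i_3,g_3,\lambda_3)$, $y=(i_4,g_4,\lambda_4)$, the equality forces $i_1=i_3$, $\lambda_2=\lambda_4$ and $g_1p_{\lambda_1i_2}g_2=g_3p_{\lambda_3i_4}g_4$, and $t_1=(i_2,\,p_{\lambda_1i_2}^{-1}g_1^{-1}g_3,\,\lambda_3)$, $t_2=(i_4,\,p_{\lambda_3i_4}^{-1}g_3^{-1}g_1,\,\lambda_1)$ verify all four identities. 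In the converse, the two ``trivial'' equations you feed in do the job: $s\cdot s^2=s^2\cdot s$ yields $s\in s^2S^I\cap S^Is^2$, hence $s\mathrel{\mathcal H}s^2$ and $H_s$ is a group; $ba\cdot b=b\cdot ab$ and its mirror yield $a\mathrel{\mathcal R}ab\mathrel{\mathcal L}b$, so $S$ is a single $\mathcal D$-class. The only spot worth tightening is the final appeal to a ``standard fact'': as stated (``a completely regular semigroup with a single $\mathcal J$-class is completely simple''), the parenthetical justification via the unique $\mathcal D$-class is slightly loose, since in general a $\mathcal J$-class need not be a $\mathcal D$-class; but your argument actually establishes the stronger single-$\mathcal D$-class conclusion, and a regular $\mathcal D$-class all of whose $\mathcal H$-classes are groups is indeed a completely simple subsemigroup (alternatively, cite Clifford's semilattice decomposition: a simple completely regular semigroup has trivial structure semilattice, hence is completely simple). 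Either way the gap is purely expository.
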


On the other hand we have the following simple fact.

\begin{Lemma}\label{l:equidivisible-within-CR}
    If $\pv V$ is a pseudovariety of completely regular semigroups
  containing $\pv{Sl}$ then 
  $\pv V$ is not equidivisible.
\end{Lemma}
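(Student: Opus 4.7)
The plan is to construct, for the two-letter alphabet $A=\{a,b\}$, an explicit four-tuple $(u,v,x,y)$ in $\Om AV$ satisfying $uv=xy$ for which no equidivisor $t\in(\Om AV)^I$ exists. Since $\pv V\subseteq\pv{CR}$, the identity $s^{\omega+1}=s$ holds in $\Om AV$, and in particular $(ab)^\omega\cdot ab=ab$. This suggests the choice
\[
  (u,v,x,y)=\bigl(a,\,b,\,(ab)^\omega,\,ab\bigr),
\]
for which $uv=ab=(ab)^{\omega+1}=xy$ holds automatically.

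The next step is to rule out any equidivisor $t$ for this factorization. I would invoke the hypothesis $\pv{Sl}\subseteq\pv V$, which yields the canonical continuous onto homomorphism $c\colon\Om AV\to\Om ASl$ sending each element to its content, extended by $c(1)=\emptyset$ to adjoined identities. The relevant projected values are $c(u)=\{a\}$, $c(v)=\{b\}$, and $c(x)=c(y)=\{a,b\}$ (the latter because in the finite free semilattice on $A$ every element is idempotent, so $c((ab)^\omega)=c(ab)=\{a,b\}$). Now for a putative equidivisor of the first type ($ut=x$, $v=ty$), projecting the equation $v=ty$ gives $\{b\}=c(t)\cup\{a,b\}$, which is impossible; for one of the second type ($xt=u$, $y=tv$), projecting $xt=u$ gives $\{a,b\}\cup c(t)=\{a\}$, which is again impossible. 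Thus no equidivisor exists, so $\Om AV$ is not equidivisible and neither is $\pv V$.

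I do not expect any real obstacle. The argument just combines two easy ingredients: the $\pv{CR}$ identity $s^{\omega+1}=s$, which furnishes a second factorization of $ab$ whose left factor is the idempotent $(ab)^\omega$, and the content projection coming from $\pv{Sl}\subseteq\pv V$, which is asymmetric enough to block every candidate equidivisor. The only minor point worth verifying is the value $c((ab)^\omega)=\{a,b\}$, and this follows from the continuity of $c$ together with idempotency in the free semilattice on~$A$.
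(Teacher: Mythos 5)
Your proposal is correct and follows essentially the same route as the paper: both exploit the completely regular identity $(ab)^{\omega+1}=ab$ to produce two factorizations of $ab$ (the paper uses $ab\cdot(ab)^\omega=a\cdot b$, you use $a\cdot b=(ab)^\omega\cdot ab$) and then rule out any equidivisor, including $t=I$, via the content homomorphism onto $\Om A{Sl}$ furnished by $\pv{Sl}\subseteq\pv V$. No gaps.
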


\begin{proof}
    Consider the alphabet $A=\{a,b\}$.
  We claim that $\Om AV$ is not equidivisible.
  Indeed, we have $ab\cdot (ab)^\omega=a\cdot b$.
  On the other hand, since $c(ab)\nsubseteq c(a)$, there is no
  $t\in (\Om AV)^I$ such that $ab\cdot t=a$. Similarly, there is no
  $t\in (\Om AV)^I$ such that $t\cdot (ab)^\omega=b$. This establishes
  the claim.
\end{proof}

Since a pseudovariety of completely regular semigroups not containing
$\pv {Sl}$ is contained in $\pv {CS}$,
combining Theorem~\ref{t:McKnight-Storey} and
Lemma~\ref{l:equidivisible-within-CR} we deduce the following result.

\begin{Cor}\label{c:equidivisible-pseudovarieties-in-CR}
  A pseudovariety of completely regular semigroups is equidivisible if and only if it is contained in $\pv {CS}$.\qed
\end{Cor}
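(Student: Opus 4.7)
The plan is to derive Corollary~\ref{c:equidivisible-pseudovarieties-in-CR} by assembling Theorem~\ref{t:McKnight-Storey}, Lemma~\ref{l:equidivisible-within-CR} and the lattice-theoretic fact, invoked in the paragraph preceding the statement, that a subpseudovariety of~$\pv{CR}$ which does not contain~$\pv{Sl}$ is already contained in~$\pv{CS}$. I would cite this last fact from the standard literature on pseudovarieties of completely regular semigroups; it is the only ingredient not already proved in this section.

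For the sufficiency, suppose that $\pv V\subseteq \pv{CS}$. Given a finite alphabet $A$, the idea is to lift Theorem~\ref{t:McKnight-Storey} through the inverse limit that defines $\Om AV$. Writing $\Om AV$ as the inverse limit of a countable system of $A$-generated finite semigroups $S_n\in\pv V\subseteq\pv{CS}$ with connecting projections $\pi_{m,n}\colon S_m\to S_n$ and projections $\pi_n\colon \Om AV\to S_n$, one takes $u,v,x,y\in\Om AV$ with $uv=xy$ and applies Theorem~\ref{t:McKnight-Storey} inside each $S_n$ to obtain elements $t_{1,n}\in S_n^I$ with $\pi_n(u)\,t_{1,n}=\pi_n(x)$ and $t_{1,n}\pi_n(y)=\pi_n(v)$. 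A standard accumulation-point argument in the compact space $(\Om AV)^I$, essentially identical to the one carried out at the end of the proof of Proposition~\ref{p:equidivisibility-under-connected-expansion}, then yields $t_1\in(\Om AV)^I$ satisfying $ut_1=x$ and $t_1 y=v$. Hence $\Om AV$ is equidivisible, so $\pv V$ is an equidivisible pseudovariety.

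For the necessity, I would argue by contradiction. Suppose $\pv V$ is an equidivisible subpseudovariety of~$\pv{CR}$ with $\pv V\not\subseteq\pv{CS}$. By the dichotomy cited above, $\pv{Sl}\subseteq \pv V\subseteq \pv{CR}$, so Lemma~\ref{l:equidivisible-within-CR} applies and shows that $\pv V$ is not equidivisible, contradicting the hypothesis. Therefore $\pv V\subseteq \pv{CS}$.

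The main obstacle is nothing more than the lattice-theoretic dichotomy for subpseudovarieties of~$\pv{CR}$, since both halves of the corollary reduce mechanically to the cited fact together with the two results already established in the section. If one preferred a self-contained treatment, one could replace the citation by a direct verification that a completely regular semigroup which is not completely simple has a homomorphic image belonging to $\pv{Sl}\setminus\pv I$, obtained from the quotient by the least semilattice congruence; this would be the only place where genuine work on the structure of $\pv{CR}$ enters.
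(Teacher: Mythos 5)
Your proposal is correct and assembles exactly the same ingredients as the paper's (one-sentence) proof: Theorem~\ref{t:McKnight-Storey}, Lemma~\ref{l:equidivisible-within-CR}, and the standard fact that a pseudovariety of completely regular semigroups not containing $\pv{Sl}$ lies in $\pv{CS}$. The only (harmless) divergence is in the sufficiency direction, where you lift McKnight--Storey through the inverse limit via an accumulation-point argument; the paper instead applies Theorem~\ref{t:McKnight-Storey} directly to $\Om AV$, which is itself a completely simple semigroup when $\pv V\subseteq\pv{CS}$, so the compactness detour (and the implicit step of choosing preimages of the $t_{1,n}\in S_n^I$ in $(\Om AV)^I$) is not needed.
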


\section{Letter super-cancelability as a necessary condition for equidivisibility}

Let $S$ be an $A$-generated compact semigroup.
This implies $S=S^IA=AS^I$.
Say that $S$ is \emph{right letter cancelative}
when, for every $a\in A$ and $u,v\in S^I$, 
the equality $ua=va$ implies
$u=v$.
Say moreover that $S$ is \emph{right letter super-cancelative}
when, for every $a,b\in A$ and $u,v\in S^I$, 
 the equality $ua=vb$ implies
 $a=b$ and $u=v$. We have the obvious
 dual notions of \emph{left letter cancelative} and
 \emph{left letter super-cancelative} semigroup.
 If $S$ is simultaneously right and left letter (super-)cancelative,
then we say $S$ is \emph{letter (super-)cancelative}.

Say that a pseudovariety of semigroups $\pv V$ is right letter (super-)cancelative if
$\Om AV$ is right letter (super-)cancelative, for every finite alphabet $A$. One also has the dual notions of left letter (super-)cancelative and letter
(super-) cancelative pseudovariety.

\begin{Example}\label{eg:stabilized-by-D-are-fin-cancel}
If\/ $\pv V$ is a semigroup pseudovariety containing some nontrivial
monoid
and such that $\pv V=\pv V\ast\pv D$,
then $\pv V$ is letter super-cancelative (cf.~\cite[Prop.~1.60]{ACosta:2007t}
and~\cite[Exercise 10.2.10]{Almeida:1994a}).
\end{Example}

In~\cite{Almeida&Klima:2015a} one finds a characterization of the
(right/left) super-cancelative pseudovarieties
as a routine exercise of application of basic results in
the theory of profinite semigroups. The following simple observation
is included in that characterization.

\begin{Lemma}\label{l:cancelable-must-contain}
  A semigroup pseudovariety $\pv V$ is right letter super-cancelative if and only if $\pv D\subseteq \pv V$ and $\pv V$ is right letter cancelative.
  Dually, $\pv V$ is left letter super-cancelative if and only if
  $\pv K\subseteq \pv V$ and $\pv V$ is left letter cancelative.
  Therefore, $\pv V$ is letter super-cancelative if and only if
  $\pv {LI}\subseteq\pv V$ and $\pv V$ is letter cancelative.
\end{Lemma}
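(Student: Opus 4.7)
The plan is to prove the first equivalence in detail; the left-right dual then gives the second, and the combined assertion drops out from $\pv{LI}=\pv K\vee\pv D$ together with the fact that letter (super-)cancelativity is just the conjunction of the one-sided versions. For the right-letter equivalence I would separate the easy direction from the substantive one.

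For the forward direction, right letter cancelativity is trivial---set $a=b$ in the super-cancel hypothesis. The work is to derive $\pv D\subseteq\pv V$. Fixing a finite alphabet $A$, I would construct a continuous surjective homomorphism $\Phi\colon\Om AV\to\Om AD$ extending the identity on $A$; by standard theory this forces every finite $\pv D$-semigroup to be a continuous quotient of some $\Om AV$ and hence to lie in $\pv V$. Super-cancelativity attaches to each $x\in\Om AV$ a unique ``last letter'' $\lambda(x)\in A$ and a unique ``prefix'' $p(x)\in(\Om AV)^I$ with $x=p(x)\lambda(x)$. A short compactness argument (using that $A$ is finite and discrete, and that super-cancel pins down limits uniquely) shows that $\lambda$ and $p$ are continuous. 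Iterating $p$, I would read off $\Phi(x)$ right-to-left as $\lambda(x),\lambda(p(x)),\lambda(p^{(2)}(x)),\ldots$, stopping with a finite word in $A^+$ if some $p^{(n)}(x)=I$ and otherwise continuing as a left-infinite sequence in $A^{-\infty}$; this hits precisely $\Om AD=A^+\cup A^{-\infty}$.

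For the reverse direction, assume $\pv D\subseteq\pv V$ and right letter cancelativity. Then $\Om AD$ is pro-$\pv V$, so the universal property of $\Om AV$ yields a continuous homomorphism $\pi\colon\Om AV\to\Om AD$ extending the identity on $A$. A direct inspection of $\Om AD$ shows that it is itself right letter super-cancelative, so from $ua=vb$ in $(\Om AV)^I$ one gets $\pi(u)a=\pi(v)b$ in $(\Om AD)^I$, whence $a=b$. Then $ua=va$, and right letter cancelativity yields $u=v$.

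The main obstacle will be verifying the homomorphism property of $\Phi$. The key identity is $p(xy)=x\cdot p(y)$ (with the convention $x\cdot I=x$), which forces a case split: when $\Phi(y)\in A^+$ is finite, $\Phi(xy)=\Phi(x)\Phi(y)$ by concatenation, while when $\Phi(y)\in A^{-\infty}$ is infinite, $\Phi(xy)=\Phi(y)$, matching the left-absorption rule in $\Om AD$. Continuity of $\Phi$ is more routine once one observes that both the letters of $A$ and the adjoined identity $I$ are isolated points of their ambient spaces, so each iterate $\lambda\circ p^{(i)}$ is locally constant into a discrete set.
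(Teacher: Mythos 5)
Your proof is correct. Be aware that the paper does not actually prove this lemma: it is presented as a ``simple observation'' contained in the characterization of super-cancelative pseudovarieties given in \cite{Almeida&Klima:2015a}, where it is described as a routine exercise, so there is no in-paper argument to compare yours against; your write-up is a sound way of carrying out that exercise. The two delicate points are exactly the ones you flag, and both go through: continuity of $\lambda$ and $p$ follows because $(\Om AV)^I\times A$ is compact and super-cancelativity makes $(p(x),\lambda(x))$ the only possible accumulation point of $(p(x_n),\lambda(x_n))$ when $x_n\to x$; and the identity $p(xy)=x\cdot p(y)$, $\lambda(xy)=\lambda(y)$, iterated, reproduces the multiplication of $A^+\cup A^{-\infty}$ (surjectivity of $\Phi$ then follows because its image is closed and contains $A^+$). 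One small simplification worth noting: to conclude $\pv D\subseteq\pv V$ you do not need the global model of \Om AD; it suffices that, for each $n$, recording the tuple $(\lambda(p^{(n-1)}(x)),\dots,\lambda(x))$ (suitably truncated when some iterate equals $I$) defines a continuous homomorphism from \Om AV onto the finite ``suffix'' semigroup of words of length at most~$n$, and these finite semigroups generate $\pv D$. The converse direction via the projection onto \Om AD, and the reduction of the two-sided statement to the one-sided ones through $\pv{LI}=\pv K\vee\pv D$, are as you describe.
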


Letter super-cancelability appears as a necessary condition for equidivisibility in the following way. 

\begin{Prop}\label{p:equid-conca-are-finitely-cancelable}
  If \/$\pv V$ is an equidivisible pseudovariety of semigroups
  not contained in $\pv {CS}$ then
  $\pv V$ is letter supper-cancelative.
\end{Prop}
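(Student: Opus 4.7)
The plan is to reduce the statement to the single claim that the null semigroup $N_2=\{x,0\}$ with $x^2=0$ belongs to $\pv V$. Once this is secured, the continuous homomorphism $\Phi\colon\Om AV\to N_2$ sending every letter of $A$ to~$x$ will annihilate every product of two elements of~$\Om AV$, and combining this with equidivisibility will rule out every possible failure of letter super-cancelability.

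To show $N_2\in\pv V$, I first observe that $\pv V$ cannot be a subpseudovariety of~$\pv{CR}$: otherwise Corollary~\ref{c:equidivisible-pseudovarieties-in-CR} would force $\pv V\subseteq\pv{CS}$, against the hypothesis. So $\pv V$ contains some $T\notin\pv{CR}$ and some element $x\in T$ not lying in any subgroup of~$T$. The monogenic subsemigroup $\langle x\rangle$ is then a divisor of~$T$ belonging to~$\pv V$, and it is not a group (else $x$ would lie in a subgroup of~$T$), so $\langle x\rangle\cong C_{i,p}$ with index $i\geq 2$. The assignment $x\mapsto x$, $x^k\mapsto 0$ for $k\geq 2$, is then a surjective semigroup homomorphism $C_{i,p}\to N_2$ (routine verification), so $N_2\in\pv V$.

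Now fix a finite alphabet~$A$, take $u,v\in(\Om AV)^I$ and $a,b\in A$ with $ua=vb$, and extend $\Phi$ to $(\Om AV)^I\to N_2^I$ by $\Phi(I)=I$. Since $\Phi(w)\in\{x,0\}$ for every $w\in\Om AV$, one checks that $\Phi(ua)$ equals $x$ if $u=I$ and~$0$ otherwise, with the analogous statement for $\Phi(vb)$. The equality $\Phi(ua)=\Phi(vb)$ therefore leaves only the two configurations $u=v=I$ (in which case $a=b$ is immediate) or $u,v\in\Om AV$. In the second configuration I apply equidivisibility to $ua=vb$: either $u=v$ and $a=b$ already, or there exists $t\in(\Om AV)^I$ with $ut=v$ and $a=tb$ (or the symmetric relation $vt=u$ and $b=ta$, handled identically). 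The case $t=I$ again gives $u=v$ and $a=b$; while for $t\in\Om AV$, applying $\Phi$ to $a=tb$ gives $x=\Phi(t)\cdot x=0$, a contradiction.

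The main obstacle is the passage from $\pv V\nsubseteq\pv{CS}$ to $N_2\in\pv V$: one needs the structural fact that a non-completely-regular monogenic semigroup is isomorphic to some $C_{i,p}$ with $i\geq 2$, and the explicit verification that the collapsing map to $N_2$ is indeed a homomorphism. Once $N_2$ is in hand, the homomorphism~$\Phi$ is an effective but crude tool that reduces the equidivisibility analysis to a small finite case check, and letter super-cancelability then also delivers $\pv{LI}\subseteq\pv V$ via Lemma~\ref{l:cancelable-must-contain}.
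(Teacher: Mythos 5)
Your proof is correct, and it follows the same skeleton as the paper's: apply equidivisibility to $ua=vb$ to obtain $t\in(\Om AV)^I$ with $a=tb$ (or the symmetric relations), then show $t=I$ by exploiting the fact that, by Corollary~\ref{c:equidivisible-pseudovarieties-in-CR}, $\pv V$ cannot be contained in $\pv{CR}$. The difference lies in how that fact is brought to bear. The paper argues by substitution: replacing every letter of $tb$ by $a$ in the pseudoidentity $a=tb$ yields $\pv V\models a=a^\nu$ for some profinite exponent $\nu>1$, forcing $\pv V\subseteq\pv{CR}$ and hence, by the corollary, $\pv V\subseteq\pv{CS}$, a contradiction. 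You instead invoke the corollary at the outset to conclude $\pv V\nsubseteq\pv{CR}$, deduce $N_2\in\pv V$ (the reduction through a monogenic $C_{i,p}$ with $i\ge 2$ and the collapsing homomorphism onto $N_2$ are both correct and routine to verify), and let the continuous homomorphism $\Phi$ onto $N_2$ certify that no letter of $\Om AV$ can equal a product $tb$ with $t\ne I$. The two devices are interchangeable --- $N_2\in\pv V$ is equivalent to the failure of every pseudoidentity $a=a^\nu$ with $\nu>1$ --- but your explicit finite witness has the small additional benefit of disposing of the degenerate configurations in which exactly one of $u,v$ equals $I$ before equidivisibility is invoked, a case the paper's write-up passes over silently; the price is the extra structural digression on monogenic semigroups. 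Both proofs handle the left-handed half of super-cancelability by symmetry.
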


\begin{proof}
  Fix a finite alphabet $A$.
  Let $u,v\in(\Om AS)^I$ and let $a,b\in A$
  be such that $\pv V\models ua=vb$.
  Since $\Om AV$ is equidivisible, there is $t\in (\Om AV)^I$ such
  that $\pv V\models ut=v$ and $\pv V\models a=tb$,
  or such that $\pv V\models vt=u$ and
  $\pv V\models b=ta$.
  Suppose that $t\neq I$. Suppose also that $\pv V\models a=tb$.
  Replacing by $a$ every letter occurring in $tb$,
  we get $\pv V\models a=a^\nu$ for some profinite exponent $\nu>1$.
  This implies that $\pv V\subseteq \pv {CR}$.
  Corollary ~\ref{c:equidivisible-pseudovarieties-in-CR}
  states that the equidivisible subpseudovarieties of $\pv {CR}$
  are precisely the subpseudovarieties of $\pv {CS}$.
  Since $\pv V$ is not contained in $\pv {CS}$,
  we reach a contradiction.
  Similarly, $\pv V\models a=tb$ leads to a contradiction.
  To avoid the contradiction, we must have $t=I$,
  whence $a=b$ and $\pv V\models u=v$.
  Symmetrically, $\pv V\models au=bv$ implies $a=b$ and
  $\pv V\models u=v$.
\end{proof}

Combining Proposition~\ref{p:equid-conca-are-finitely-cancelable}
with Lemma~\ref{l:cancelable-must-contain} we get the following corollary.

\begin{Cor}\label{c:equidiv-not-in-CR}
  If \pv V is an equidivisible pseudovariety not contained in~\pv{CS},
  then \pv V contains~\pv{LI}.\qed
\end{Cor}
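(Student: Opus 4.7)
The plan is to combine the two results immediately preceding the corollary in a direct chain. Given an equidivisible pseudovariety $\pv V$ that is not contained in $\pv {CS}$, I would first invoke Proposition~\ref{p:equid-conca-are-finitely-cancelable} to conclude that $\pv V$ is letter super-cancelative. I would then apply Lemma~\ref{l:cancelable-must-contain}, which characterizes letter super-cancelative pseudovarieties as precisely those that contain $\pv {LI}$ and are letter cancelative. Extracting only the containment part yields $\pv {LI} \subseteq \pv V$, which is the desired conclusion.

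Since the real content has already been absorbed into the two cited results (the proposition handles the use of equidivisibility together with Corollary~\ref{c:equidivisible-pseudovarieties-in-CR} to force $t=I$ in the definition of equidivisibility, and the lemma packages a standard profinite argument), there is no genuine obstacle left to overcome. The corollary is essentially a one-line deduction, and its purpose is to isolate and highlight the consequence that will be most useful later: namely, any attempt to produce an equidivisible pseudovariety outside $\pv {CS}$ automatically brings $\pv {LI}$ along. This is exactly the hypothesis needed to apply Corollary~\ref{c:equidivisibility-of-LImV} in the reverse direction of the forthcoming main theorem, so the role of the present corollary is principally structural: it sets up the dichotomy between the completely simple case and the case where the Mal'cev product characterization $\pv V = \pv {LI} \malcev \pv V$ can be expected.
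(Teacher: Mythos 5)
Your proof is correct and is exactly the paper's argument: the corollary is stated there with the introductory sentence ``Combining Proposition~\ref{p:equid-conca-are-finitely-cancelable} with Lemma~\ref{l:cancelable-must-contain} we get the following corollary,'' which is precisely your chain of deductions. Nothing further is needed.
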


\section{Equidivisible subpseudovarieties of \protect{\pv {LG}}}

Recall that the class \pv{LG} of all finite local groups is the largest
pseudovariety of semigroups whose semilattices are trivial. Local
groups are thus generalizations of completely simple semigroups that
turn out to be sometimes much harder to handle. For our purposes, the
following technical result turns out to be essential.

\begin{Lemma}\label{l:DinVinLG}
  If\/ \pv V is a subpseudovariety of\/ \pv{LG} containing~\pv D, then \pv
  V is right letter super-cancelative.
\end{Lemma}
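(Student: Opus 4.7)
By Lemma~\ref{l:cancelable-must-contain}, since $\pv D \subseteq \pv V$, it suffices to prove that $\pv V$ is right letter cancelative: that for $u, v \in (\Om AV)^I$ and $a \in A$ with $u a = v a$ in $\Om AV$, necessarily $u = v$. My plan combines the two hypotheses in turn.

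First I would use $\pv D \subseteq \pv V$. From this containment the pseudoidentity $u a = v a$ also holds in $\pv D$. The well-known explicit description of $\Om AD$ as $A^+$ together with the left-infinite words over $A$ (with multiplication that simply appends on the right) makes it immediate that right multiplication by a letter is injective there; so $\Om AD$ is right letter super-cancelative, and we conclude $\pv D \models u = v$, i.e., the projections of $u$ and of $v$ to $\Om AD$ coincide.

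Next I would use $\pv V \subseteq \pv{LG}$ together with Tilson's delay theorem $\pv{LG} = \pv G \ast \pv D$. For any finite $A$-generated $V \in \pv V$ with homomorphism $\varphi\colon A^+ \to V$, the semigroup $V$ divides a semidirect product $G \ast R$ with $G \in \pv G$ and $R \in \pv D$. Writing a lift of $\varphi$ into $G \ast R$ coordinatewise as $(g,r)$, the previous step forces the $R$-components of the lifts of $\varphi(u)$ and $\varphi(v)$ to coincide, so the equation $\varphi(u)\varphi(a) = \varphi(v)\varphi(a)$ reduces to an identity $g_u \cdot (r \cdot g_a) = g_v \cdot (r \cdot g_a)$ in the group $G$, which cancels to $g_u = g_v$; hence the lifts of $\varphi(u)$ and $\varphi(v)$ agree in $G \ast R$, and so $\varphi(u) = \varphi(v)$ in $V$.

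The main obstacle is that \emph{divides} in Tilson's theorem means quotient of a subsemigroup, so a lift of $\varphi\colon A^+ \to V$ to a homomorphism into $G \ast R$ does not automatically carry the equation $u a = v a$ literally (only modulo the surjection back to $V$). Resolving this rigorously requires either a careful choice of lift inside the $A$-generated free pro-$\pv{LG}$ semigroup and its semidirect decomposition, or a direct structural argument inside $V$ itself using the local group $eVe$ at $e = \varphi(a)^\omega$ together with the $\pv D$-congruence on $V$ coming from the first step; this is the technical crux of the proof.
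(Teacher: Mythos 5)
Your first step is sound and coincides with the paper's: reduce to right letter cancelativity via Lemma~\ref{l:cancelable-must-contain}, and use $\pv D\subseteq\pv V$ to get $a=b$ and $\pv D\models u=v$. The gap is in the second step, and you have in fact located it yourself: the argument via $\pv{LG}=\pv G\ast\pv D$ is not carried out, only sketched, and the sketch fails at exactly the point you flag. Since $V$ only \emph{divides} $G\ast R$, a lift $\tilde\varphi$ of $\varphi$ into $G\ast R$ satisfies merely $\pi\bigl(\tilde\varphi(u)\tilde\varphi(a)\bigr)=\pi\bigl(\tilde\varphi(v)\tilde\varphi(a)\bigr)$ for the covering morphism $\pi$, not $\tilde\varphi(u)\tilde\varphi(a)=\tilde\varphi(v)\tilde\varphi(a)$ in $G\ast R$; so there is nothing to cancel in the group coordinate, and agreement of the $R$-coordinates (which does follow from $\pv D\models u=v$) does not repair this. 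Announcing that the difficulty can be resolved ``by a careful choice of lift'' or ``by a direct structural argument'' is naming the problem, not solving it; as written the proof is incomplete.

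For comparison, the paper avoids the delay theorem entirely. From $\pv D\models u=v$ one gets that either $u=v$ or both are infinite with the same finite suffixes; writing $u=u_ns_n$ and $v=v_ns_n$ with $s_n$ the common suffix of length~$n$ and passing to a convergent subsequence yields $u=u'w$ and $v=v'w$ with $w$ a common \emph{infinite} suffix. Then $w=w_1w_2^\omega w_3$, and the local group at the idempotent $w_2^\omega$ gives $(w_2^\omega w_3aw_2^\omega)^\omega=w_2^\omega$ in $\pv{LG}$, whence $\pv V$ satisfies $w=wat$ with $t=(w_2^\omega w_3aw_2^\omega)^{\omega-1}w_3$; now $u=u'w=u'wat=uat=vat=v'wat=v'w=v$. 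Note that the relevant idempotent is extracted from the common infinite suffix $w$, not taken to be $\varphi(a)^\omega$ as your fallback suggests, and the crucial point is producing a right inverse of $a$ ``inside'' $w$ so that multiplication by $a$ can be undone. If you want to salvage your approach, this extraction of a common infinite suffix is the missing ingredient.
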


\begin{proof}
  Let $u$ and $v$ be pseudowords and $a$ and $b$ be letters such that
  the pseudoidentity $ua=vb$ holds in~\pv V.
  The pseudovariety  \pv D is right
  letter super-cancelative, whence $a=b$ and \pv D satisfies $u=v$.
  In particular, $u=v$ or both $u$  and $v$ are infinite pseudowords.
  Suppose the latter case occurs.
  Let $s_n$ be the suffix of~$u$ of length~$n$, which is also the
  suffix of length~$n$ of~$v$, that is, there are factorizations
  $u=u_ns_n$ and $v=v_ns_n$. By compactness, there exists a convergent
  subsequence of the sequence of triples $(u_n,v_n,s_n)$ and,
  therefore, there exist pseudowords $u',v',w$, where $w$ is infinite,
  such that $u=u'w$ and $v=v'w$ (in~\pv S). Since $w$~is infinite,
  there is a factorization of the form $w=w_1w_2^\omega w_3$
  \cite[Corollary~5.6.2]{Almeida:1994a}. As \pv V~is contained
  in~\pv{LG}, it must satisfy the following pseudoidentities:
  $$w=w_1w_2^\omega w_3
  =w_1(w_2^\omega w_3aw_2^\omega)^\omega w_3
  =w_1w_2^\omega w_3at
  =wa\cdot t
  $$
  where $t=(w_2^\omega w_3aw_2^\omega)^{\omega-1}w_3$.
  Hence, since the pseudovariety \pv V satisfies $ua=va$, it also
  satisfies
  $$u=u'w
  =u'wa\cdot t
  =ua\cdot t
  =va\cdot t
  =v'wa\cdot t
  =v'w
  =v.\popQED
  $$
\end{proof}

\begin{Thm}\label{t:equidiv-in-LG}
  A subpseudovariety of\/ \pv{LG} is equidivisible if and only if it is
  contained in~\pv{CS} or it contains~\pv{LI}.
\end{Thm}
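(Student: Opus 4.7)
The direction $(\Rightarrow)$ is immediate from Corollary~\ref{c:equidiv-not-in-CR}: if $\pv V$ is equidivisible and is not contained in $\pv{CS}$, then $\pv{LI}\subseteq\pv V$.

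For $(\Leftarrow)$, the case $\pv V\subseteq\pv{CS}$ is handled by Theorem~\ref{t:McKnight-Storey}, since every finite semigroup in $\pv V$ is then completely simple, hence equidivisible, and the same compactness-plus-pigeonhole-on-the-two-types argument used at the end of the proof of Proposition~\ref{p:equidivisibility-under-connected-expansion} transfers equidivisibility from the finite continuous quotients to the inverse limit $\Om AV$.

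Suppose now $\pv{LI}\subseteq\pv V\subseteq\pv{LG}$. My plan is to establish equidivisibility of $\Om AV$ directly. Applying Lemma~\ref{l:DinVinLG} together with its left-right dual (which follows by the same argument, or alternatively from Lemma~\ref{l:cancelable-must-contain} using $\pv K\subseteq\pv{LI}\subseteq\pv V$), one obtains that $\pv V$ is both right and left letter super-cancelative. Given $u,v,x,y\in\Om AV$ with $uv=xy$, I would split into cases according to whether each of the four elements lies in $A^+$ or is an infinite pseudoword. Whenever at least one of $v,y$ is a finite word, comparing suffixes of bounded length in $\Om AD$ (well defined since $\pv D\subseteq\pv V$) and iterating right letter super-cancelability lets one peel letters off the shorter side and read off an explicit witness $t\in A^*$ for the required factorisation; the symmetric situation, in which one of $u,x$ is a word, is handled dually by left letter super-cancelability.

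The main obstacle is the remaining sub-case, in which $u,v,x,y$ are all infinite pseudowords. Projection onto $\Om A{LI}$ forces $\overleftarrow u=\overleftarrow x$ and $\overrightarrow v=\overrightarrow y$, expressing that $u,x$ share a common left-infinite prefix and $v,y$ a common right-infinite suffix. The hypothesis $\pv V\subseteq\pv{LG}$ enters decisively here: every idempotent of $\Om AV$ has a group $\mathcal H$-class, so for a carefully chosen idempotent $e$---extracted from a factorisation of the common right-infinite suffix of $v$ and $y$, in the spirit of the factorisation $w=w_1w_2^\omega w_3$ exploited in the proof of Lemma~\ref{l:DinVinLG}---one can construct a candidate $t\in\Om AV$ out of a group-inverse inside the local group $H_e$ and verify the identities $ut=x$, $ty=v$ (or their dual) by one further iterated application of letter super-cancelability. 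Selecting a single idempotent $e$ that is simultaneously compatible with both factorisations $uv$ and $xy$ is where I expect the delicate technical work of the argument to concentrate.
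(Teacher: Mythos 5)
Your argument matches the paper's proof up to and including the key reduction: the forward direction via Corollary~\ref{c:equidiv-not-in-CR}, the case $\pv V\subseteq\pv{CS}$ via Theorem~\ref{t:McKnight-Storey}, and the use of Lemma~\ref{l:DinVinLG} together with its dual to reduce the remaining case to the situation where $u,v,x,y$ are all infinite --- this is exactly the paper's route. (Minor remark: for the $\pv{CS}$ case you do not need the compactness-plus-pigeonhole transfer; $\Om AV$ is itself a completely simple profinite semigroup, being an inverse limit of completely simple semigroups, so Theorem~\ref{t:McKnight-Storey} applies to it directly.)

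The genuine gap is in the all-infinite case, and you flag it yourself: your plan hinges on selecting a single idempotent $e$ ``simultaneously compatible with both factorisations'' and building $t$ as a group inverse in $H_e$, but you give no way to make that choice, and it is precisely the step that cannot be waved away. The paper avoids this entirely with a short observation that renders the local-group computation unnecessary. The calculation already carried out in the proof of Lemma~\ref{l:DinVinLG} shows that, for $\pv V\subseteq\pv{LG}$ and $w$ infinite, $\pv V$ satisfies $w=wa\cdot t$ for every letter $a$ (and dually $w=t'\cdot aw$); iterating and passing to limits, every infinite pseudoword $w$ satisfies $w\mathrel{\mathcal R}ws$ and $w\mathrel{\mathcal L}sw$ for all $s$, whence $w\leq_{\mathcal J}s$ for every $s\in\Om AV$. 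Thus all infinite pseudowords lie in the minimum ideal of $\Om AV$, which is a completely simple semigroup, and $uv=xy$ is an equality of products inside that ideal. Theorem~\ref{t:McKnight-Storey} applied to the minimum ideal then immediately yields $t$ with $ut=x$ and $ty=v$ (indeed in both directions at once). To repair your proof, replace the search for a compatible idempotent by this minimal-ideal argument; as written, the decisive step of your plan is an acknowledged unsolved problem rather than a proof.
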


\begin{proof}
  We already know that every subpseudovariety of~\pv{CS} is
  equidivisible by~Theorem \ref{t:McKnight-Storey}.
  Thus, for the remainder of the proof, we assume that
  \pv V is a subpseudovariety of~\pv{LG} not contained in~\pv{CS}.

  If \pv V is equidivisible, then \pv V contains~\pv{LI} by
  Corollary~\ref{c:equidiv-not-in-CR}. For the converse, suppose that \pv
  V contains~\pv{LI} and that \pv V satisfies the pseudoidentity
  $uv=xy$. By Lemma~\ref{l:DinVinLG}, \pv V is right letter
  super-cancelative. By duality, \pv V~is also left letter super-cancelative.
  Therefore, to prove that there is a (possibly empty) pseudoword $t$
  such that in $\pv V$  we have $ut=x$ and $v=ty$, or $xt=u$ and $y=tv$, we may assume that all the pseudowords
  $u,v,x,y$ are infinite.
  Hence, $uv=xy$ may be viewed as an equality
  between products in the minimum ideal of a suitable \Om AV, which
  is a completely simple semigroup. By
  Theorem~\ref{t:McKnight-Storey}, we may conclude that \pv V is
  equidivisible.
\end{proof}

\section{Characterization of equidivisible pseudovarieties}
\label{sec:char-equid-pseud}

Let $\varphi$ be a homomorphism from $A^+$ onto a semigroup $S$.
Given $u\in A^+$, a \emph{transition edge for $u$ in $\Gamma_\varphi$} is an element of
$T(p_u)$.
Note that $T(p_u)$ is always nonempty, since there is no path from
$(\varphi(u),I)$ to $(I,\varphi(u))$ in $\Gamma_\varphi$.
If $A$ and $S$ are finite, then $T(p_u)$ is finite and so, for some integer $n$,
we can consider the \emph{sequence $(\varepsilon_i)_{i\in \{1,\ldots,n\}}$ of transition edges for $u$ in $\Gamma_\varphi$}, where $\varepsilon_i$ is the $i$-th transition edge appearing in $p_u$.

In this section we shall work primarily with the expansion
$\varphi^\krho$, but at some point it will be
convenient to use another expansion which we next describe.
In a graph, the \emph{content} of a path $p$
is the set $c(p)$ of edges in the path.
Consider the relation $\equiv_{\varphi,\pv {Sl}}$ on $A^+$
defined by $u\equiv_{\varphi,\pv {Sl}} v$ if and only if
$c(p_u)=c(q_v)$.
Note that since $p_w$ starts at $(I,\varphi(w))$,
if $u\equiv_{\varphi,\pv {Sl}} v$ then
$\varphi(u)=\varphi(v)$ holds. The relation $\equiv_{\varphi,\pv {Sl}}$
is a congruence and the quotient  homomorphism $A^+\to A^+/{\equiv_{\varphi,\pv {Sl}}}$
is precisely the two-sided semidirect product expansion
$\varphi^\expsl\colon A^+\to S^\expsl$
(cf.~ \cite[Sec.~5.4]{Elston:1999}).\footnote{The reader is cautioned for some misprints in the discussion made in \cite[Sec.~5.4]{Elston:1999}; for instance, at some point a map $\psi$ is defined that takes a word $u$ to the set of edges in $p_u$, and not to the set of transition edges as by mistake it is written there. The characterization of $\varphi^\expsl$ is an application
  of \cite[Corollary 5.4]{Elston:1999} and of a result of I.~Simon~\cite{Simon:1972} (a proof of which may be found in \cite[Theorem VIII.7.1]{Eilenberg:1976})
  stating that two paths in a graph have the same content if and only if they are equal in the free
  category relatively to $\ell\pv {Sl}$.}

Suppose moreover that $A$ and $S$ are finite.
Then $S^\krho$ and $S^\expsl$
are both finite semigroups.
Denote by $\Phi$ (respectively, $\Phi^\krho$ and $\Phi^\expsl$)
the unique continuous homomorphism from $\Om AS$ onto $S$
(respectively, $S^\krho$ and $S^\expsl$)
whose restriction to $A^+$ is $\varphi$
(respectively, $\varphi^\krho$ and $\varphi^\expsl$).
Let $u\in\Om AS$. Consider an arbitrary sequence
$(u_n)_n$ of elements of $A^+$ converging to $u$. Then, there is $N$ such that
$\Phi^\krho(u)=\varphi^\krho(u_n)$
for every $n\geq N$.
Therefore, we can define a \emph{transition edge for $u$ in $\Gamma_\varphi$}
as being an element of $T(p_{u_n})$ for every sufficiently large $n$, since this set depends only on $\varphi$ and~$u$. In a similar way,
one can define
the \emph{sequence of transition edges for $u$ in~$\Gamma_\varphi$}
as being the sequence of transition edges for $u_n$ in~$\Gamma_\varphi$
for every sufficiently large $n$,
and an \emph{edge for $u$ in $\Gamma_\varphi$}
as being an element of $c(p_{u_n})$ for every sufficiently large $n$.
Note that a transition edge for $u$ is indeed an edge for~$u$.

If $\psi$ is a continuous homomorphism from $\Om AS$ onto a finite semigroup~$T$, then we denote by $\Gamma_\psi$ the two-sided Cayley graph of the restriction of $\psi$ to~$A^+$. Since $\psi$ is the unique continuous extension
to~$\Om AS$ of its restriction to~$A^+$, the homomorphisms $\psi^\krho$
and $\psi^\expsl$ are defined in view of the previous paragraph.
Their images are also denoted $S_\varphi^\krho$ and $S_\psi^\expsl$, respectively.  

  \begin{Lemma}\label{l:factorization-around-a-transition-edge}
    Let $\varphi$ be a continuous homomorphism from $\Om AS$
    onto a finite semigroup $S$, where $A$ is a finite alphabet.
    Let $u\in\Om AS$.
    If $((s_1,t_1),a,(s_2,t_2))$ is an edge for $u$ in~$\Gamma_\varphi$,
    then there is a factorization
    $u=u_1au_2$ of $u$, with $u_1,u_2\in (\Om AS)^I$,
    such that $\varphi(u_1)=s_1$ and $\varphi(u_2)=t_2$.
  \end{Lemma}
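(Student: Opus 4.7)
The plan is to reduce to the finite-word case and then pass to the limit by compactness. Take any sequence $(u_n)_n$ of elements of $A^+$ converging to $u$. By the definition of an edge for $u$ in $\Gamma_\varphi$ recalled just above the lemma, the edge $\varepsilon = ((s_1,t_1),a,(s_2,t_2))$ belongs to $c(p_{u_n})$ for every sufficiently large $n$, so I may discard finitely many terms and assume that $\varepsilon$ appears in $p_{u_n}$ for every $n$.

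For each $n$, pick an occurrence of $\varepsilon$ in $p_{u_n}$. Since the vertices of $p_{u_n}$ are precisely the pairs $(\varphi(r),\varphi(s))$ arising from factorizations $u_n = rs$ with $r,s \in A^\ast$, such an occurrence corresponds to a factorization
\begin{equation*}
  u_n = v_n\, a\, w_n, \qquad v_n,w_n \in A^\ast,
\end{equation*}
with $\varphi(v_n) = s_1$ and $\varphi(w_n) = t_2$ (reading $\varphi(\varepsilon) = I$), because $(s_1,t_1)$ is the source and $(s_2,t_2)$ the target of $\varepsilon$.

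Now view the sequences $(v_n)_n$ and $(w_n)_n$ as lying in the compact space $(\Om AS)^I$. By compactness, pass to a common subsequence along which $v_n \to u_1$ and $w_n \to u_2$ for some $u_1,u_2 \in (\Om AS)^I$. Multiplication in $(\Om AS)^I$ is continuous, so
\begin{equation*}
  u_1 \, a \, u_2 \;=\; \lim_n v_n\, a\, w_n \;=\; \lim_n u_n \;=\; u,
\end{equation*}
and continuity of the extension of $\varphi$ to $(\Om AS)^I \to S^I$ yields $\varphi(u_1) = s_1$ and $\varphi(u_2) = t_2$, as required.

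The only conceptual point is the observation that the source and target coordinates of an occurrence of $\varepsilon$ in the path $p_{u_n}$ record exactly $\varphi$-values of a prefix and a suffix of $u_n$, which is immediate from the definition of $\Gamma_\varphi$; the rest is routine compactness. I expect no real obstacle; in particular, since we only need the existence of one factorization, there is no issue with $\varepsilon$ possibly occurring several times in $p_{u_n}$ (any choice of occurrence works).
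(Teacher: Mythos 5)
Your proof is correct and follows essentially the same route as the paper's: realize the edge as an occurrence in $p_{u_n}$ for a word sequence $u_n\to u$, extract the corresponding factorizations $u_n=v_nav_n'$, and pass to an accumulation point using compactness and continuity of multiplication and of $\varphi$. The only cosmetic difference is that the paper fixes a sequence with $\varphi^{K(\ell\pv{Sl})}(u_n)=\varphi^{K(\ell\pv{Sl})}(u)$ for all $n$ instead of discarding finitely many terms, and you should avoid reusing $\varepsilon$ both for the edge and for the empty word.
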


  \begin{proof}
    We may consider a sequence $(u_n)_n$ of elements of $A^+$
    converging to $u$ and
    such that $\varphi^\expsl(u_n)=\varphi^\expsl(u)$ for every $n$.
    In particular, for every~$n$, the edge $((s_1,t_1),a,(s_2,t_2))$
    is an edge for $u_n$ in~$\Gamma_\varphi$, and so $u_n$ factors
    as $u_n=u_{n,1}au_{n,2}$ for some $u_{n,1},u_{n,2}\in A^\ast$
    such that $\varphi(u_{n,1})=s_1$ and $\varphi(u_{n,2})=t_2$.
    By compactness, the sequence of pairs $(u_{n,1},u_{n,2})$
    has some accumulation
    point $(u_1,u_2)$ in $(\Om AS)^I\times (\Om AS)^I$.
    By continuity of multiplication and of $\varphi$,
    we have $u=u_1au_2$, $\varphi(u_1)=s_1$ and $\varphi(u_2)=t_2$.    
  \end{proof}

  \begin{Lemma}\label{l:letter-augmentation}
    Let $\theta\colon B^+\to A^+$ be a homomorphism
    satisfying $\theta(B)=A$,
    for some alphabets $A$ and $B$.
    Consider a homomorphism $\varphi$ from $A^+$ onto a semigroup $S$.
    Let $\psi$ be the homomorphism from $B^+$ onto $S$
    such that $\psi=\varphi\circ\theta$.
    Then we have
    \begin{equation*}
      \psi^\krho(u)=\psi^\krho(v)\implies
      \varphi^\krho(\theta(u))=\varphi^\krho(\theta(v)),
    \end{equation*}
    for every $u,v\in B^+$. Consequently,
    $S_\varphi^\krho$ is a homomorphic image of $S_\psi^\krho$.
  \end{Lemma}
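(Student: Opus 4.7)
The plan is to exploit a natural edge map $\Theta$ from $\Gamma_\psi$ to $\Gamma_\varphi$. Both graphs share the vertex set $S^I\times S^I$, since $\varphi$ and $\psi$ have the same image $S$. Given an edge $((s_1,t_1),b,(s_2,t_2))$ of $\Gamma_\psi$, I would define $\Theta$ to send it to $((s_1,t_1),\theta(b),(s_2,t_2))$. The hypothesis $\theta(B)=A$ ensures that $\theta(b)$ is a letter of $A$, and the identity $\psi(b)=\varphi(\theta(b))$ makes the conditions $s_1\varphi(\theta(b))=s_2$ and $t_1=\varphi(\theta(b))t_2$ automatic, so the target really is an edge of $\Gamma_\varphi$. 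A straightforward induction on the length of $u\in B^+$ then shows that applying $\Theta$ edge-by-edge to $p_u^\psi$ yields exactly $p_{\theta(u)}^\varphi$.

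The key step is to show that $\Gamma_\psi$ and $\Gamma_\varphi$ induce the same partition of $S^I\times S^I$ into strongly connected components. One direction is immediate, since $\Theta$ converts any path of $\Gamma_\psi$ into a path of $\Gamma_\varphi$ with the same endpoints. For the converse I would use the surjectivity $\theta(B)=A$ to lift paths: given a path in $\Gamma_\varphi$ labelled $a_1\cdots a_n$ from $(s,t)$ to $(s',t')$, pick $b_i\in B$ with $\theta(b_i)=a_i$; the same sequence of intermediate vertices, equipped with the edges labelled $b_i$, forms a path in $\Gamma_\psi$, because the conditions defining each edge depend only on the value $\psi(b_i)=\varphi(a_i)$, already verified along the $\Gamma_\varphi$ path. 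It follows that a triple is a transition edge of $\Gamma_\psi$ if and only if its $\Theta$-image is a transition edge of $\Gamma_\varphi$.

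With these pieces in place, $\Theta(T(p_u^\psi))=T(p_{\theta(u)}^\varphi)$, so $T(p_u^\psi)=T(p_v^\psi)$ forces $T(p_{\theta(u)}^\varphi)=T(p_{\theta(v)}^\varphi)$; together with the trivial equivalence between $\psi(u)=\psi(v)$ and $\varphi(\theta(u))=\varphi(\theta(v))$, this establishes the displayed implication. For the final assertion I would set $\sigma(\psi^\krho(u))=\varphi^\krho(\theta(u))$, which is well-defined and a homomorphism by what has just been proved, and surjective because every $w\in A^+$ is of the form $\theta(u)$ for some $u\in B^+$. The only genuine obstacle is the coincidence of strongly connected components; everywhere else the hypothesis $\theta(B)=A$ is either unused or used merely to lift individual letters.
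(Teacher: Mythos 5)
Your proposal is correct and follows essentially the same route as the paper: both rest on the observation that, because $\theta$ maps letters onto letters and $\psi=\varphi\circ\theta$, paths in $\Gamma_\psi$ and $\Gamma_\varphi$ between the same pairs of vertices correspond to one another, so the two graphs have the same strongly connected components and hence corresponding transition edges. The only difference is cosmetic: you spell out the path-lifting argument that the paper states as an unproved ``fact''.
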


  \begin{proof}
    The proof relies on the following fact:
    there is in $\Gamma_\psi$ a path from
    $(s_1,t_1)$ to $(s_2,t_2)$ labeled by $u\in B^+$
    if and only if
    there is in $\Gamma_\varphi$ a path from
    $(s_1,t_1)$ to $(s_2,t_2)$ labeled by $\theta(u)\in A^+$.
    In particular, $((s_1,t_1),b,(s_2,t_2))$
    is a transition edge of $\Gamma_\psi$
    if and only if
    $((s_1,t_1),\theta(b),(s_2,t_2))$
    is a transition edge of $\Gamma_\varphi$.

    Let $u,v\in B^+$ be such that
    $\psi^\krho(u)=\psi^\krho(v)$.
    Then we immediately have
    $\varphi(\theta(u))=\varphi(\theta(v))$. Suppose that
    $((s_1,t_1),a,(s_2,t_2))$
    is a transition edge 
    of $\Gamma_\varphi$ belonging to the path $p_{\theta(u)}$.
    We have $s_1=\varphi(w_1)$
    and $t_2=\varphi(w_2)$ for some $w_1,w_2\in A^\ast$
    such that $\theta(u)=w_1aw_2$.
    There is a factorization $u=u_1bu_2$
    with $w_1=\theta(u_1)$,
    $w_2=\theta(u_2)$ and
    $a=\theta(b)$. It then follows that
    $((s_1,t_1),b,(s_2,t_2))$
    is a transition edge of $\Gamma_\psi$ belonging to $p_{u}$.
    It belongs also to $p_{v}$, since
    $\psi^\krho(u)=\psi^\krho(v)$.
    Therefore,
    $((s_1,t_1),a,(s_2,t_2))$ is a transition edge of $\Gamma_\varphi$ in $p_{\theta(v)}$.
    Symmetrically, every transition edge
    of $\Gamma_\varphi$
    belonging to $p_{\theta(v)}$
    also belongs to $p_{\theta(u)}$, establishing that 
    $\varphi^\krho(\theta(u))=\varphi^\krho(\theta(v))$.

    Therefore, we can consider the onto homomorphism
    $\rho\colon S_\psi^\krho\to S_\varphi^\krho$ defined by
    $\rho(\psi^\krho(u))=\varphi^\krho(\theta(u))$.    
  \end{proof}
  
We are now ready to prove our main result.  
  
\begin{Thm}\label{t:complete-characterization-of-equidivisible-pseudovarieties}
  A pseudovariety of semigroups $\pv V$
  is equidivisible if and only if it is contained in $\pv {CS}$ or
 it is closed under the two-sided Karnofsky-Rhodes expansion.
\end{Thm}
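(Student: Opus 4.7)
For the sufficiency, both conditions give equidivisibility immediately: if $\pv V\subseteq\pv{CS}$ apply Theorem~\ref{t:McKnight-Storey}, and if $\pv V$ is closed under the two-sided Karnofsky-Rhodes expansion apply Corollary~\ref{c:equidivisibility-under-connected-expansion}. For the converse, assume $\pv V$ is equidivisible and not contained in $\pv{CS}$. Corollary~\ref{c:equidiv-not-in-CR} yields $\pv{LI}\subseteq\pv V$, and Proposition~\ref{p:equid-conca-are-finitely-cancelable} together with Lemma~\ref{l:cancelable-must-contain} gives that $\pv V$ is letter super-cancelative. The goal is to show that $\pv V$ is closed under the two-sided Karnofsky-Rhodes expansion; so I would fix $S\in\pv V$ and an onto homomorphism $\varphi\colon A^+\to S$ with $A$ finite, and then set out to prove $S^\krho\in\pv V$.

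Equivalently, the continuous homomorphism $\Phi^\krho\colon\Om AS\to S^\krho$ extending $\varphi^\krho$ must factor through the canonical projection $p\colon\Om AS\to\Om AV$. Since $S^\krho$-equality of two pseudowords amounts to equality of their $\Phi$-images together with equality of their transition-edge sets in~$\Gamma_\varphi$, and $\Phi$ already factors through $p$ (as $S\in\pv V$), the problem reduces to showing that $T(p_\alpha)=T(p_\beta)$ whenever $\alpha,\beta\in\Om AS$ satisfy $p(\alpha)=p(\beta)$. By symmetry and Lemma~\ref{l:factorization-around-a-transition-edge}, this reduces further to the following lifting statement: given a factorization $\alpha=\alpha_1 a\alpha_2$ in~$\Om AS$ with $\alpha_1,\alpha_2\in(\Om AS)^I$, one can produce a factorization $\beta=\beta_1 a\beta_2$ in $\Om AS$ with $\varphi(\beta_i)=\varphi(\alpha_i)$ for $i=1,2$.

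For this lifting I would use the equidivisibility of~$\Om AV$ (which holds since $\pv V$ is equidivisible) combined with the structure coming from $\pv{LI}\subseteq\pv V$: via the projections onto $\Om AK$ and $\Om AD$, every element of $\Om AS$ has a well-defined first and last letter, and these letters are preserved under~$p$. The plan is to peel letters simultaneously from the left of $\alpha_1 a\alpha_2$ and of $\beta$, using at each step that their first letters agree as a $\pv V$-invariant via $\pv K$ and then cancelling by left letter cancelability in~$\Om AV$. When $\alpha_1$ is a word in $A^*$ this peeling terminates and finishes the sub-claim by induction on $|\alpha_1|$; when $\alpha_1$ is a genuine infinite pseudoword it does not, and the argument has to be finished by applying equidivisibility of~$\Om AV$ directly to the equation $p(\alpha_1)\cdot p(a\alpha_2)=p(\beta)$ to split $p(\beta)$ in~$\Om AV$, and then lifting that split back to~$\Om AS$ by a compactness argument on approximating sequences of words. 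Lemma~\ref{l:letter-augmentation} is available as a reduction device to ensure that the particular occurrence of $a$ being tracked can be unambiguously marked by passing to a suitably larger alphabet.

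The main obstacle is precisely the pseudoword case of this lifting: the equidivisibility of~$\Om AV$ produces splittings only at the level of~$\Om AV$, whereas the factorization we need lives in~$\Om AS$, and transferring a split back from $\Om AV$ to $\Om AS$ is the delicate step. This is why mere letter super-cancelability is not enough; the full force of $\pv{LI}\subseteq\pv V$ is needed, supplying the first- and last-letter rigidity that makes the lifting work. Once the lifting is achieved for every transition edge of~$\Gamma_\varphi$, symmetry gives $T(p_\alpha)=T(p_\beta)$, hence $\Phi^\krho(\alpha)=\Phi^\krho(\beta)$, and therefore $S^\krho\in\pv V$, which establishes that $\pv V$ is closed under the two-sided Karnofsky-Rhodes expansion.
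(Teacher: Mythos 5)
Your sufficiency argument and your opening reduction are both fine: for a finite $A$-generated semigroup, membership of $S^\krho$ in $\pv V$ is indeed equivalent to having $T(p_\alpha)=T(p_\beta)$ whenever $p(\alpha)=p(\beta)$, which is the same target the paper reaches (via Reiterman's theorem and an auxiliary alphabet $B=A\cup X$). The trouble starts with your ``lifting statement''. In the generality you state it --- for an arbitrary factorization $\alpha=\alpha_1a\alpha_2$ --- it is false. Take $\pv V=\pv{LI}$ (equidivisible and not contained in $\pv{CS}$ by Theorem~\ref{t:equidiv-in-LG}), $S$ the trivial semigroup, $A=\{a,b\}$, $\alpha=a^\omega b a^\omega$ and $\beta=a^\omega$. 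Then $p(\alpha)=p(\beta)$ in $\Om A{LI}$ since $\pv{LI}\models x^\omega yx^\omega=x^\omega$, and the factorization $a^\omega\cdot b\cdot a^\omega$ witnesses the loop $((1,1),b,(1,1))$ as an edge for $\alpha$; but no factorization of $a^\omega$ can isolate the letter $b$, so this edge is not an edge for $\beta$. Only the restriction to \emph{transition} edges is true (that loop is not one), and it is exactly there that your plan does not close.

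The step you yourself label ``the main obstacle'' is a genuine gap, and the tools you invoke cannot fill it: equidivisibility compares \emph{two} factorizations of the same element ($uv=xy$), whereas $p(\alpha_1)\cdot p(a\alpha_2)=p(\beta)$ presents $p(\beta)$ unfactored, so there is nothing to apply it to, and choosing a factorization of $\beta$ to compare with is precisely what you are trying to produce. The paper never lifts a factorization of $\beta$ back to $\Om AS$. Instead it compares the ordered \emph{sequences} $(\varepsilon_i)$ and $(\delta_i)$ of transition edges of the two pseudowords, takes the least index $j$ with $\varepsilon_j\neq\delta_j$, uses Lemma~\ref{l:factorization-around-a-transition-edge} to realize $\varepsilon_j$ and $\delta_j$ by factorizations $u_1a\cdot u_2=v_1\cdot bv_2$, and applies equidivisibility of $\pv V$ to these two factorizations. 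The resulting pseudoidentities, evaluated in the finite semigroup $S$, produce paths in the finite graph $\Gamma_\psi$ joining $\omega(\varepsilon_j)$ or $\omega(\delta_j)$ back to $\alpha(\varepsilon_j)$ --- here the minimality of $j$ is essential, since it places $\alpha(\varepsilon_j)$ and $\alpha(\delta_j)$ in the same strongly connected component --- contradicting the definition of transition edge; the residual case is eliminated by letter super-cancelativity, forcing $\varepsilon_j=\delta_j$. This induction along the transition-edge sequence, and the fact that the contradiction is extracted inside the finite two-sided Cayley graph rather than from a factorization in $\Om AS$, are the ideas missing from your proposal; a per-edge lifting cannot work because nothing ties an isolated transition edge of $p_\alpha$ to the place where $p_\beta$ changes strongly connected components.
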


\begin{proof}
  The ``if'' part follows from Corollaries~\ref{c:equidivisibility-under-connected-expansion} and~\ref{c:equidivisible-pseudovarieties-in-CR}.

  Conversely, suppose that $\pv V$ is an equidivisible pseudovariety
  not contained in $\pv {CS}$.
  Let $u,v$ be elements
  of $\Om XS$ such that $\pv V\models u=v$, where $X$ is some finite alphabet.
  For a finite alphabet $A$, let $\varphi$ be a continuous homomorphism
  from $\Om AS$ onto a semigroup $S$ from~$\pv V$.
  Consider the finite alphabet $B=A\cup X$.
  There is a continuous onto homomorphism $\theta\colon \Om BS\to \Om AS$
  such that $\theta(B)=A$.
  Let $\psi$ be the unique continuous homomorphism from $\Om BS$ onto
  $S$ such that $\psi=\varphi\circ\theta$.
  We will show that $S_\psi^\krho\models u=v$.

  Viewing $u,v$ as elements of $\Om BS$,
  and because $\pv V\models u=v$, we have $\psi(u)=\psi(v)$.
  We claim that
  $\psi^\krho(u)=\psi^\krho(v)$.
  Let $(\varepsilon_i)_{i\in \{1,\ldots,n\}}$
  and $(\delta_i)_{i\in \{1,\ldots,m\}}$ be 
  the sequences of transition edges in $\Gamma_{\psi}$ respectively
  for $u$ and for $v$.
  Without loss of generality, we may assume that~$n\leq m$.

  Suppose that the set
  \begin{equation}\label{eq:characterization-0}
       \{i\in \{1,\ldots,n\}\mid \varepsilon_i\neq \delta_i\}
  \end{equation}
  is nonempty, and let
  $j$ be its minimum.
  By Lemma~\ref{l:factorization-around-a-transition-edge},
  there are factorizations $u=u_1au_2$ of $u$
  and $v=v_1bv_2$ of $v$, with $a,b\in B$ and $u_1,u_2,v_1,v_2\in (\Om BS)^I$,
  such that
  \begin{equation*}
  \varepsilon_j=((\psi(u_1),\psi(au_2)),a,(\psi(u_1a),\psi(u_2))    
  \end{equation*}
  and
  \begin{equation*}
  \delta_j=((\psi(v_1),\psi(bv_2)),b,(\psi(v_1b),\psi(v_2)).    
\end{equation*}
Note that $\alpha(\varepsilon_j)$ and $\alpha(\delta_j)$
belong to the same strongly connected component of $\Gamma_{\psi}$, by the minimality of the index $j$.

  Since $\pv V\models u_1a\cdot u_2=v_1\cdot bv_2$ and $\pv V$ is equidivisible,
  there is $t\in (\Om BS)^I$ such that
  \begin{equation}\label{eq:characterization-1}
    \pv V\models u_1at=v_1\quad \text{and}\quad\pv V\models u_2=tbv_2,
  \end{equation}
  or
  \begin{equation}\label{eq:characterization-2}
    \pv V\models v_1t=u_1a\quad \text{and}\quad\pv V\models bv_2=tu_2.
  \end{equation}
  
  If Case~\eqref{eq:characterization-1} holds, then there is
  in $\Gamma_\psi$ a (possibly empty) path from
  $\omega(\varepsilon_j)$
  to
  $\alpha(\delta_j)$, labeled by a word
  $t_0\in B^\ast$ such that $\psi(t_0)=\psi(t)$.
  Since $\alpha(\delta_j)$ and
  $\omega(\delta_j)$ belong to the same strongly connected component, we conclude there is in $\Gamma_\psi$
  a path from $\omega(\varepsilon_j)$
  to $\alpha(\varepsilon_j)$,
  contradicting the fact that $\varepsilon_j$ is a transition edge.

  Therefore, Case~\eqref{eq:characterization-2} holds
  with $t\neq I$.
  By Proposition~\ref{p:equid-conca-are-finitely-cancelable},
  it follows from~\eqref{eq:characterization-2}
  that there is $t'\in(\Om BS)^I$ with $t=t'a$
  and
  \begin{equation}\label{eq:characterization-3}
    \pv V\models v_1t'=u_1\quad \text{and}\quad\pv V\models bv_2=t'au_2.
  \end{equation}
  
  Suppose that $t'\neq I$. Again by Proposition~\ref{p:equid-conca-are-finitely-cancelable}, it follows from~\eqref{eq:characterization-3}
  that 
  there is $t''\in(\Om BS)^I$ with $t'=bt''$
  and
  \begin{equation}\label{eq:characterization-4}
    \pv V\models v_1b\cdot t''=u_1\quad \text{and}\quad\pv V\models v_2=t''\cdot au_2.
  \end{equation}
  This implies the existence of a path in $\Gamma_{\psi}$
  from $\omega(\delta_j)$ to $\alpha(\varepsilon_j)$, which once more leads to a contradiction with the definition of a transition edge.

  Therefore, we have $t'=I$, and so,
  thanks to Proposition~\ref{p:equid-conca-are-finitely-cancelable},
  from~\eqref{eq:characterization-3}
  we get $\pv V\models v_1=u_1$, $a=b$ and $\pv V\models v_2=u_2$.
  This yields $\varepsilon_j=\delta_j$, which contradicts the initial
  assumption.
  Therefore, the set~\eqref{eq:characterization-0} is empty.
  In particular, $\varepsilon_n=\delta_n$ holds.
  Since $\varepsilon_n$ is the last transition edge for $u$,
  we have $\omega(\delta_n)=(\psi(u),I)$, which means that
  $\delta_n$ is the last transition edge for $v$,
  whence $m=n$ and $\varepsilon_i=\delta_i$ for every $i\in\{1,\ldots,n\}$.
  This concludes the proof
  that $\psi^\krho(u)=\psi^\krho(v)$.

  Now, let $\zeta$ be an arbitrary continuous homomorphism
  from $\Om BS$ into $S_\psi^\krho$. Because $\psi^\krho$ is onto, there is a continuous endomorphism $\lambda$ of $\Om BS$ such that $\zeta=\psi^\krho\circ\lambda$.
  Since we also have $\pv V\models \lambda(u)=\lambda(v)$,
  we deduce that $\zeta(u)=\zeta(v)$. This
  establishes our claim that $S_\psi^\krho\models u=v$.
  
    Applying Lemma~\ref{l:letter-augmentation},
    we conclude that $S_\varphi^\krho\models u=v$.
  By Reiterman's  Theorem~\cite{Reiterman:1982}, we deduce that
  $S_\varphi^\krho\in\pv V$, 
  thus proving that $\pv V$
  is closed under the two-sided Karnofsky-Rhodes expansion.
\end{proof}

\begin{Cor}\label{c:characterization-of-LImV}
  Let $\pv V$ be a pseudovariety of semigroups. The following conditions
  are equivalent:
  \begin{enumerate}
  \item $\pv V$ is equidivisible and it is not contained in $\pv {CS}$;\label{item:characterization-of-LImV-1}    
  \item $\pv V=\pv {LI}\malcev \pv V$;\label{item:characterization-of-LImV-2}
  \item $\pv V$ is closed under the two-sided Karnofsky-Rhodes expansion;\label{item:characterization-of-LImV-3}
  \item $\pv V$ is closed under the two-sided connected expansion.\label{item:characterization-of-LImV-4}
  \end{enumerate}
\end{Cor}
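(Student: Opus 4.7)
The plan is to assemble the four-way equivalence from results already established in the paper, supplemented by a small explicit computation that separates $\pv{CS}$ from the pseudovarieties closed under the connected expansion. The strategy is to establish the triangle (1) $\Leftrightarrow$ (2) $\Leftrightarrow$ (3) directly from previous results, then attach (4) to the chain via (3) $\Rightarrow$ (4) $\Rightarrow$ (1).

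First, I would observe that the equivalence (2) $\Leftrightarrow$ (3) is exactly Corollary~\ref{c:closure-under-Karnofsky-Rhodes-expansion} and that (1) $\Rightarrow$ (3) is precisely Theorem~\ref{t:complete-characterization-of-equidivisible-pseudovarieties}. To close the triangle, it then suffices to prove (3) $\Rightarrow$ (1). Corollary~\ref{c:equidivisibility-under-connected-expansion} provides equidivisibility, and the conclusion that $\pv V \not\subseteq \pv{CS}$ can be obtained by routing through (2): from $\pv V = \pv{LI} \malcev \pv V$ one gets $\pv{LI} \subseteq \pv V$ (since the Mal'cev product $\pv U \malcev \pv V$ contains $\pv U$ whenever $\pv V$ contains the trivial semigroup, as every pseudovariety does), and $\pv{LI}$ fails to be contained in $\pv{CS}$, the two-element null semigroup being a concrete witness as it lies in $\pv{LI}$ but has a proper ideal.

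For (3) $\Rightarrow$ (4), I would invoke the observation recorded just before Proposition defining the connected expansion, namely that $S^C$ is a homomorphic image of $S^\krho$; closure under quotients, built into the definition of a pseudovariety, then propagates closure under the Karnofsky-Rhodes expansion to closure under the connected expansion.

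The only direction requiring genuinely new work is (4) $\Rightarrow$ (1). Proposition~\ref{p:equidivisibility-under-connected-expansion} hands us equidivisibility immediately. To rule out $\pv V \subseteq \pv{CS}$ I would argue by contradiction: if $\pv V \subseteq \pv{CS}$, then $\pv V$ still contains the trivial semigroup $S = \{\ast\}$, and applying the connected expansion over the two-letter alphabet $\{a,b\}$ yields, as the paper already records just after the definition, $\varphi^C(a) = \varphi^C(b)$. A direct inspection of the strongly connected components of $\Gamma_\varphi$ shows that the reachable vertices $(I,\ast)$, $(\ast,\ast)$, $(\ast,I)$ form three singleton SCCs (the middle one carrying self-loops through both letters), so $S^C$ is the two-element semigroup $\{\alpha,\beta\}$ with $\alpha^{2}=\alpha\beta=\beta\alpha=\beta^{2}=\beta$; since $\{\beta\}$ is a proper ideal, $S^C \notin \pv{CS}$, hence $S^C \notin \pv V$, contradicting closure of $\pv V$ under the connected expansion. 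The main obstacle is essentially nonexistent: the substantive technical work is packaged into Theorem~\ref{t:complete-characterization-of-equidivisible-pseudovarieties} and its satellite corollaries, and the only genuinely new ingredient is the small two-generator calculation just sketched, whose sole purpose is to isolate $\pv{CS}$ from the pseudovarieties closed under the connected expansion.
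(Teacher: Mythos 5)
Your proof is correct and follows essentially the same route as the paper: the triangle (1)\,$\Leftrightarrow$\,(2)\,$\Leftrightarrow$\,(3) from Corollary~\ref{c:closure-under-Karnofsky-Rhodes-expansion}, Theorem~\ref{t:complete-characterization-of-equidivisible-pseudovarieties} and the observation that $\pv{LI}\subseteq\pv{LI}\malcev\pv V$ rules out $\pv{CS}$; the quotient argument for (3)\,$\Rightarrow$\,(4); and Proposition~\ref{p:equidivisibility-under-connected-expansion} plus an explicit connected expansion of the trivial semigroup for (4)\,$\Rightarrow$\,(1). The only cosmetic difference is in that last computation: the paper uses a one-letter alphabet and concludes $S^C\notin\pv{CR}$ via $\varphi^C(a)\neq\varphi^C(a^{\omega+1})$, whereas you use two letters and identify $S^C$ directly as the two-element null semigroup, which is not in $\pv{CS}$ --- both are valid.
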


\begin{proof}
  The equivalence
  \eqref{item:characterization-of-LImV-2}$\Leftrightarrow$\eqref{item:characterization-of-LImV-3}
  is Corollary~\ref{c:closure-under-Karnofsky-Rhodes-expansion}.
  In particular, a pseudovariety closed under the
  the two-sided Karnofsky-Rhodes expansion
  contains~$\pv {LI}$. As $\pv {LI}$ is not contained in $\pv {CS}$,
  the equivalences
  \eqref{item:characterization-of-LImV-1}$\Leftrightarrow$\eqref{item:characterization-of-LImV-2}$\Leftrightarrow$\eqref{item:characterization-of-LImV-3} then follow
  from
  Theorem~\ref{t:complete-characterization-of-equidivisible-pseudovarieties}.
  Since the two-sided connected expansion is a quotient of the
  two-sided Karnofsky-Rhodes expansion, we clearly have
  \eqref{item:characterization-of-LImV-3}$\Rightarrow$\eqref{item:characterization-of-LImV-4}.
  Conversely, suppose that  $\pv V$ is closed
  under the two-sided connected expansion.
  By Proposition~\ref{p:equidivisibility-under-connected-expansion},
  $\pv V$ is equidivisible.
  We claim that $\pv V$ is not contained in~$\pv {CS}$.
  Consider the mapping $\varphi$
  from $\Om {\{a\}}S$ onto the trivial semigroup $S=\{1\}$.
  The path in $\Gamma_\varphi$ from $(I,1)$ to $(1,I)$
  labeled $a$ intersects precisely two strongly connected components of
  $\Gamma_\varphi$, while, for every $k\geq 2$, the
  path in $\Gamma_\varphi$ from $(I,1)$ to $(1,I)$
  labeled $a^{k}$ intersects precisely three strongly connected components
  of $\Gamma_\varphi$. Therefore,
  denoting by $\varphi^C$ the canonical continuous homomorphism
  from $\Om {\{a\}}S$ to $S^C$ whose restriction to ${\{a\}}^+$
  is the connected expansion of $\varphi|_{{\{a\}}^+}$, we
  have  $\varphi^C(a)\neq\varphi^C(a^{\omega+1})$.
  This shows that $S^C\notin\pv {CR}$, establishing the claim, and concluding the proof that we have 
  \eqref{item:characterization-of-LImV-4}$\Rightarrow$\eqref{item:characterization-of-LImV-1}.
\end{proof}

There is one important further connection of the conditions of
Corollary~\ref{c:characterization-of-LImV} with varieties of
languages. Indeed, as has been proved by Pin~\cite{Pin:1980b} (cf.\
\cite[Theorem~7.3]{Pin:1997}), the language counterpart of the
operator $\pv V\mapsto\pv{LI}\malcev\pv V$ is the closure under
unambiguous product.
  
We conclude the paper with one further application of our results for
pseudovarieties of local groups. Combining
Corollary~\ref{c:characterization-of-LImV} with
Theorem~\ref{t:equidiv-in-LG}, we obtain the following.
  
\begin{Cor}\label{c:subLG-ovLI}
  If $\pv V$ is a subpseudovariety of\/ $\pv {LG}$ containing $\pv {LI}$
  then $\pv V=\pv {LI}\malcev \pv V$.\qed
\end{Cor}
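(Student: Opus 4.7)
The plan is to chain together the two main structural results already established in the paper: Theorem~\ref{t:equidiv-in-LG}, which characterises the equidivisible subpseudovarieties of $\pv{LG}$ as those contained in $\pv{CS}$ or containing $\pv{LI}$, and Corollary~\ref{c:characterization-of-LImV}, which identifies the equidivisible pseudovarieties not contained in $\pv{CS}$ with the fixed points of the operator $\pv V\mapsto\pv{LI}\malcev\pv V$.

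First I would observe that $\pv V$ is not contained in $\pv{CS}$. Indeed, $\pv V$ contains $\pv{LI}$, and $\pv{LI}$ contains finite semigroups that are not completely simple (for instance any non-trivial null semigroup, which lies in $\pv K\subseteq\pv{LI}$ but has a zero and hence cannot belong to~$\pv{CS}$). Since $\pv{LI}\subseteq\pv V$, this witness already shows $\pv V\not\subseteq\pv{CS}$.

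Next, since $\pv V$ is a subpseudovariety of $\pv{LG}$ that contains $\pv{LI}$ (and in particular is not contained in $\pv{CS}$), Theorem~\ref{t:equidiv-in-LG} applies and gives that $\pv V$ is equidivisible. At this point $\pv V$ satisfies condition~\eqref{item:characterization-of-LImV-1} of Corollary~\ref{c:characterization-of-LImV}, namely that $\pv V$ is equidivisible and not contained in~$\pv{CS}$. Invoking the equivalence \eqref{item:characterization-of-LImV-1}$\Leftrightarrow$\eqref{item:characterization-of-LImV-2} of that corollary yields the desired equality $\pv V=\pv{LI}\malcev\pv V$.

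There is essentially no obstacle here: the entire argument is a short two-step deduction from results already proved. The only verification requiring a moment's thought is the non-inclusion $\pv V\not\subseteq\pv{CS}$, which reduces to producing one concrete element of $\pv{LI}\setminus\pv{CS}$ and is completely routine.
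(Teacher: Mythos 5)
Your proof is correct and is exactly the argument the paper intends: the corollary is stated with a \qed because it follows by combining Theorem~\ref{t:equidiv-in-LG} (which gives equidivisibility of $\pv V$) with the equivalence of conditions \eqref{item:characterization-of-LImV-1} and \eqref{item:characterization-of-LImV-2} in Corollary~\ref{c:characterization-of-LImV}. Your explicit verification that $\pv{LI}\not\subseteq\pv{CS}$ via a nontrivial null semigroup is the same observation the paper makes in the proof of Corollary~\ref{c:characterization-of-LImV}, so there is nothing to add.
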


It is well known that $\pv {LI}\vee \pv H=\pv {LI}\malcev \pv H$ for
every pseudovariety $\pv H$ of groups
\cite[Corollary~3.2]{Hall&Weil:1999}.
The previous results provide an indirect proof of the following
extension of~that fact, which appears to be new.

\begin{Cor}\label{c:CSveeLI}
  If $\pv V$ is a subpseudovariety of\/ $\pv {LG}$
  then $\pv {LI}\vee \pv V=\pv {LI}\malcev \pv V$.
\end{Cor}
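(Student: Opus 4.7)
The plan is to derive this result from Corollary~\ref{c:subLG-ovLI} by applying it not to $\pv V$ itself but to the enlarged pseudovariety $\pv {LI}\vee\pv V$. Since by hypothesis $\pv V\subseteq\pv {LG}$ and trivially $\pv {LI}\subseteq\pv {LG}$, the join $\pv {LI}\vee\pv V$ is again a subpseudovariety of $\pv {LG}$, and it visibly contains~$\pv {LI}$. Hence Corollary~\ref{c:subLG-ovLI} applies to it and yields the identity
\begin{equation*}
  \pv {LI}\vee\pv V \;=\; \pv {LI}\malcev(\pv {LI}\vee\pv V).
\end{equation*}

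From here I would establish the two inclusions separately. For the inclusion $\pv {LI}\malcev\pv V\subseteq\pv {LI}\vee\pv V$, I would invoke the monotonicity of the Mal'cev product in its second argument: since $\pv V\subseteq\pv {LI}\vee\pv V$, one has $\pv {LI}\malcev\pv V\subseteq\pv {LI}\malcev(\pv {LI}\vee\pv V)$, and combining with the displayed equality above delivers the desired inclusion.

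For the reverse inclusion $\pv {LI}\vee\pv V\subseteq\pv {LI}\malcev\pv V$, I would quote the standard fact that for any (nonempty) pseudovarieties $\pv U$ and $\pv W$ one has both $\pv U\subseteq\pv U\malcev\pv W$ and $\pv W\subseteq\pv U\malcev\pv W$. The first inclusion is witnessed, for $S\in\pv U$, by the unique relational morphism from~$S$ to the trivial semigroup; the second is witnessed, for $T\in\pv W$, by the identity relational morphism on~$T$, whose idempotent preimages are singletons. Consequently $\pv {LI}$ and $\pv V$ are both contained in $\pv {LI}\malcev\pv V$, and hence so is their join.

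There is no real obstacle here: the content of the corollary lies entirely in Corollary~\ref{c:subLG-ovLI} (which in turn rests on Theorem~\ref{t:equidiv-in-LG} and Corollary~\ref{c:characterization-of-LImV}), and the present statement just packages that result together with two generic monotonicity/containment properties of the Mal'cev product that are independent of the ambient pseudovariety $\pv{LG}$.
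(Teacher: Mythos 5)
Your proposal is correct and follows essentially the same route as the paper: apply Corollary~\ref{c:subLG-ovLI} to the join $\pv{LI}\vee\pv V$ (which is a subpseudovariety of $\pv{LG}$ containing $\pv{LI}$) and then sandwich $\pv{LI}\malcev\pv V$ between $\pv{LI}\vee\pv V$ and $\pv{LI}\malcev(\pv{LI}\vee\pv V)$ using the two generic Mal'cev-product containments. The only cosmetic difference is that the paper explicitly routes through the equidivisibility of $\pv{LI}\vee\pv V$ via Theorem~\ref{t:equidiv-in-LG}, whereas you invoke Corollary~\ref{c:subLG-ovLI} directly, which already encapsulates that step.
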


\begin{proof}
  Let $\pv V$ be a subpseudovariety of $\pv {LG}$.
  Then the pseudovariety $\pv {LI}\vee \pv V$
  is equidivisible by Theorem~\ref{t:equidiv-in-LG}.
  Therefore, applying Corollary~\ref{c:subLG-ovLI}, we obtain
  $\pv {LI}\vee \pv V
  =\pv {LI}\malcev (\pv {LI}\vee \pv V)\supseteq
  \pv {LI}\malcev \pv V
  \supseteq \pv {LI}\vee \pv V
  $.
\end{proof}

Reading \cite[Corollary~4.3]{Costa:2002a} one finds the
following basis for
$\pv {LI}\vee \pv {CS}$:
\begin{equation*}
  \pv {LI}\vee \pv {CS}=
  \op z^\omega (xy)^\omega x t^\omega = z^\omega x t^\omega,xy^\omega z
  =(xy^\omega z)^{\omega+1}\cl.
\end{equation*}
As an example of application of Corollary~\ref{c:CSveeLI}, we
obtain a simplified basis for $\pv {LI}\vee \pv {CS}$,
made of a pseudoidentity involving only
three letters.

\begin{Prop}\label{p:LIveeCS}
  The pseudovariety $\pv{LI}\vee\pv{CS}$ is defined by the
  pseudoidentity $(xy)^\omega(xz)^\omega(xy)^\omega=(xy)^\omega$.
\end{Prop}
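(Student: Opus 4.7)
The plan is to prove both inclusions in the equality $\pv{LI}\vee\pv{CS}=\op\alpha\cl$, where $\alpha$ abbreviates the pseudoidentity $(xy)^\omega(xz)^\omega(xy)^\omega=(xy)^\omega$.

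For the inclusion $\pv{LI}\vee\pv{CS}\subseteq\op\alpha\cl$, I would verify $\alpha$ separately in $\pv{LI}$ and $\pv{CS}$. In $\pv{LI}$, local triviality gives $e\cdot s\cdot e=e$ for every idempotent $e$ and every $s$; taking $e=(xy)^\omega$ and $s=(xz)^\omega$ yields $\alpha$ at once. In $\pv{CS}$, using the Rees matrix representation $M(G,I,\Lambda,P)$ with $x=(i,g,\lambda)$, both $(xy)^\omega$ and $(xz)^\omega$ have first coordinate $i$ (inherited from $x$), so they are $\mathcal R$-related idempotents; and any pair of $\mathcal R$-related idempotents $e,f$ in any semigroup satisfies $ef=f$ and $fe=e$, whence $efe=e$.

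For the reverse inclusion, let $\pv V=\op\alpha\cl$. The forward direction already shows $\pv V\supseteq\pv{LI}$. The first key step is to establish $\pv V\subseteq\pv{LG}$. Substituting $y\mapsto x^{\omega-1}$ in $\alpha$ yields $x^\omega(xz)^\omega x^\omega=x^\omega$, and substituting further $z\mapsto x^{\omega-1}yx^\omega$ produces $x^\omega(x^\omega yx^\omega)^\omega x^\omega=x^\omega$. Since $x^\omega\cdot(x^\omega yx^\omega)^\omega=(x^\omega yx^\omega)^\omega=(x^\omega yx^\omega)^\omega\cdot x^\omega$ (the idempotent $(x^\omega yx^\omega)^\omega$ begins and ends with $x^\omega$), this collapses to $(x^\omega yx^\omega)^\omega=x^\omega$, which is a defining pseudoidentity of $\pv{LG}$. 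Hence $\pv V$ is a subpseudovariety of $\pv{LG}$ containing $\pv{LI}$, and Corollary~\ref{c:subLG-ovLI} gives $\pv V=\pv{LI}\malcev\pv V$.

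To conclude $\pv V\subseteq\pv{LI}\vee\pv{CS}$, I would derive from $\alpha$ the two pseudoidentities comprising Costa's basis in \cite[Corollary~4.3]{Costa:2002a}: (a)~$xy^\omega z=(xy^\omega z)^{\omega+1}$ and (b)~$z^\omega(xy)^\omega xt^\omega=z^\omega xt^\omega$. Both should follow from $\alpha$ by carefully chosen substitutions, assisted by the $\pv{LG}$ structure established above. For (b), substituting $(x,y,z)\mapsto(z,z^{\omega-1}(xy)^\omega xt^\omega,z^{\omega-1}xt^\omega)$ in $\alpha$ and in its $y\leftrightarrow z$ dual yields that $e=(z^\omega(xy)^\omega xt^\omega)^\omega$ and $f=(z^\omega xt^\omega)^\omega$ satisfy $efe=e$ and $fef=f$; the $\pv{LG}$ structure should then upgrade this mutual absorption of idempotents to the desired equality of the underlying pseudowords. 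For (a), the idea is that in $\pv{LG}$ the local monoid $y^\omega Sy^\omega$ is a group, so $(y^\omega zxy^\omega)^{\omega+1}=y^\omega zxy^\omega$; $\alpha$-type absorption then transports this group-likeness from the sandwiched expression to $xy^\omega z$.

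The main obstacle I expect is working out the explicit manipulations for (a) and (b): the substitutions above identify the correct shapes, but turning them into level-one equalities (rather than equalities of $\omega$-powers) requires further combining of $\alpha$ with the $\pv{LG}$-property. Once (a) and (b) are verified, Reiterman's theorem yields $\pv V\subseteq\pv{LI}\vee\pv{CS}$, completing the proof.
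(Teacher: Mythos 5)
Your forward inclusion $\pv{LI}\vee\pv{CS}\subseteq\op(xy)^\omega(xz)^\omega(xy)^\omega=(xy)^\omega\cl$ is correct and complete, and your reduction of the reverse inclusion to $\pv{LG}$ is a nice, valid step: the substitutions $y\mapsto x^{\omega-1}$, $z\mapsto x^{\omega-1}yx^\omega$ do yield $(x^\omega yx^\omega)^\omega=x^\omega$, so $\pv V:=\op\alpha\cl\subseteq\pv{LG}$, and Costa's pseudoidentity $xy^\omega z=(xy^\omega z)^{\omega+1}$ then comes for free since it holds throughout $\pv{LG}$ (write $(xy^\omega z)^{n+1}=x(y^\omega zxy^\omega)^n z$ and use that $y^\omega zxy^\omega$ has $\omega$-power $y^\omega$). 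The genuine gap is the other half of Costa's basis, $z^\omega(xy)^\omega xt^\omega=z^\omega xt^\omega$, which is exactly the pseudoidentity carrying the content of the proposition. Your proposed mechanism — substitute into $\alpha$ to get $efe=e$ and $fef=f$ for $e=(z^\omega(xy)^\omega xt^\omega)^\omega$ and $f=(z^\omega xt^\omega)^\omega$, then ``upgrade'' to equality of the bases — does not work as stated: the relations $efe=e$, $fef=f$ only make $e$ and $f$ mutually inverse, $\mathcal D$-equivalent idempotents; they do not force $e=f$, and even $e=f$ would not give equality of the underlying pseudowords (in a group every $\omega$-power is the identity, so such relations between $\omega$-powers carry no information about the bases). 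You acknowledge this yourself, but it is precisely the step that cannot be waved through: without it you have only $\pv{LI}\vee\pv{CS}\subseteq\pv V\subseteq\pv{LG}$, and the intermediate conclusion $\pv V=\pv{LI}\malcev\pv V$ from Corollary~\ref{c:subLG-ovLI} is never actually used to close the argument.

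For comparison, the paper avoids deriving Costa's basis from $\alpha$ altogether. It goes in the opposite direction: it invokes the characterization of group radicals in \cite[Theorem~6.1]{Almeida&Margolis&Steinberg&Volkov:2010} to produce an explicit two-pseudoidentity basis for $\pv{LI}\malcev\pv{CS}$, observes that one of the two pseudoidentities is redundant modulo $\pv{LG}$, and then simplifies the remaining one by reversible multiplications and substitutions until it becomes $\alpha$; the identification of $\pv{LI}\malcev\pv{CS}$ with $\pv{LI}\vee\pv{CS}$ is then Corollary~\ref{c:CSveeLI}. If you want to salvage your route, the realistic options are either to import that external basis for the Mal'cev product (at which point you are reproducing the paper's proof), or to find an explicit syntactic derivation of $z^\omega(xy)^\omega xt^\omega=z^\omega xt^\omega$ from $\alpha$ within $\pv{LG}$ — which is essentially the content you would need to reconstruct by hand, and is not supplied by the substitutions you propose.
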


\begin{proof}
  By~\cite[Theorem~6.1]{Almeida&Margolis&Steinberg&Volkov:2010}, taking
  $\Sigma=\{(zt)^\omega z=z\}$, $\pv H=\pv I$, $W=\{x_1\}$, $m=0$, and
  $\alpha_1=x_1^{\omega+1}$, we obtain that the pseudovariety
  $\pv{LI}\malcev\pv{CS}$ is defined by the following pseudoidentities:
  \begin{align}
    \label{eq:LImCS-1}
    \Bigl(
    \bigl(x(zt)^\omega zy\bigr)^\omega
    xzy
    \bigl(x(zt)^\omega zy\bigr)^\omega
    \Bigr)^\omega
    &=
      \bigl(x(zt)^\omega zy\bigr)^\omega
    \\
    \label{eq:LImCS-2}
    (xzy)^{\omega-1}
    \big(x(zt)^\omega zy\bigr)^{\omega+1}
    (xzy)^\omega
    &=
      (xzy)^\omega.
  \end{align}
  Note that the first of these pseudoidentities is valid in~\pv{LG}
  while the pseudovariety defined by the second one is contained
  in~\pv{LG}. Hence, $\pv{LI}\malcev\pv{CS}$ is defined by the
  pseudoidentity \eqref{eq:LImCS-2}. Further simplifications of the
  pseudoidentity \eqref{eq:LImCS-2} may be carried out as follows.
  First, since it defines a subpseudovariety of~\pv{LG}, the
  $(\omega+1)$-power of the infinite element in the middle may be
  replaced by the base of that power. Second, pre-multiplying both
  sides by $zy$, post-multiplying by $x(zyx)^{\omega-1}$ and applying
  suitable conjugations to shift infinite powers, we obtain the
  pseudoidentity $(zyx)^\omega(zt)^\omega(zyx)^\omega=(zyx)^\omega$.
  Substituing $x$ by $zy$, we deduce the equivalent pseudoidentity
  $(zy)^\omega(zt)^\omega(zy)^\omega=(zy)^\omega$, as the former can
  be recovered by substituting in the latter $y$ by $yx$. Renaming
  variables, this shows that
  $(xy)^\omega(xz)^\omega(xy)^\omega=(xy)^\omega$ is a simplified
  basis for $\pv{LI}\malcev\pv{CS}$. Finally, we apply
  Corollary~\ref{c:CSveeLI}.
\end{proof}

\bibliographystyle{amsplain}
\providecommand{\bysame}{\leavevmode\hbox to3em{\hrulefill}\thinspace}
\providecommand{\MR}{\relax\ifhmode\unskip\space\fi MR }
\providecommand{\MRhref}[2]{%
  \href{http://www.ams.org/mathscinet-getitem?mr=#1}{#2}
}
\providecommand{\href}[2]{#2}

\end{document}